\renewcommand{\ge}{\geqslant}
\renewcommand{\le}{\leqslant}
\newtheorem{theorem}{Theorem}
\newtheorem{lemma}[theorem]{Lemma}
\newtheorem{corollary}[theorem]{Corollary}
\newcommand{\N}{{\mathbb N}}
\newcommand{\R}{{\mathbb R}}
\renewcommand{\tilde}{\widetilde}
\numberwithin{equation}{section}
\newcommand{\CC}     {\mathcal{C}}
\newcommand{\J}     {\mathcal{J}}
\newcommand{\G}     {\mathcal{S}}
\newcommand{\PPara}     {\mathcal{P}}
\begin{document}

\title[Nonlocal minimal surfaces]{Regularity
properties\\
of nonlocal minimal surfaces\\
via limiting arguments}

\author{Luis Caffarelli}
\address{
University of Texas at Austin\\
Department of Mathematics \\
1 University Station C1200\\
Austin, TX 78712-0257 (USA)
}
\email{caffarel@math.utexas.edu}   
\author{Enrico Valdinoci}
\address{
Dipartimento di Matematica\\
Universit\`a di Roma Tor Vergata\\
Via della Ricerca Scientifica, 1\\
I-00133 Roma (Italy)
}
\email{enrico@math.utexas.edu}  

\begin{abstract}
We prove an improvement of flatness result for
nonlocal minimal surfaces which is independent
of the fractional parameter~$s$ when~$s\rightarrow 1^-$.

As a consequence,
we obtain that all the nonlocal minimal
cones are flat and that all the nonlocal minimal
surfaces are smooth when the dimension of
the ambient space is less or equal than~$7$
and~$s$ is close to~$1$.
\end{abstract}

\maketitle

The purpose of this paper is to study some
regularity properties of nonlocal minimal surfaces
as they approach the classical minimal surfaces.

Let~$n\ge2$ and~$s\in(0,1)$. Given two non-overlapping (measurable)
subsets $A$ and~$B$ of~$\R^n$, we define
$$ {\mathcal{L}}(A,B):= \int_A\int_B
\frac{1}{|x-y|^{n+s}}\,dy\,dx.$$
Given a bounded open set~$\Omega \subset\R^n$
and a set~$E\subseteq\R^n$,
we let
$$ \J_s (E,\Omega):=
{\mathcal{L}}({E\cap\Omega},{(\CC 
E)\cap \Omega})+{\mathcal{L}}({E\cap\Omega},{(\CC
E)\cap (\CC\Omega)})+{\mathcal{L}}({E\cap(\CC\Omega)},{(\CC
E)\cap \Omega}).$$
We say that~$E$ is~$s$-minimal in~$\Omega$
if
for any~$\tilde E\subseteq\R^n$ for which~$\tilde E
\cap (\CC \Omega)=E\cap (\CC \Omega)$ one has that 
$$ \J_s (E,\Omega)\le \J_s (\tilde E,\Omega).$$    
That is,~$E$ is~$s$-minimal if it
minimizes the functional among\footnote{Following a standard
convention in geometric measure theory, all the sets
will be implicitly assumed to contain their measure theoretic interior
and to lie outside their measure theoretic exterior -- this
is possible up to changing a set with a 
set of zero Lebesgue measure, which does not affect
the functional~$\J_s$. More explicitly, if we set
\begin{eqnarray*}&& E_{\mathcal{I}}
:=\{ x\in E {\mbox{ s.t. }} \exists r>0 {\mbox{ s.t. }} | (\CC E)\cap 
B_r(x)|=0\}\\ {\mbox{and }}&&
E_{\mathcal{E}}
:=\{ x\in E {\mbox{ s.t. }} \exists r>0 {\mbox{ s.t. }} | E\cap
B_r(x)|=0\},\end{eqnarray*}
we take the convention that~$ E_{\mathcal{I}}\subseteq E$ and~$E\cap 
E_{\mathcal{E}}=\varnothing$.}
competitors which agree outside~$\Omega$.  

The functional~$\J_s$ has been recently introduced in~\cite{CRS}
as a model for nonlocal minimal surfaces, and its
relation with the classical minimal surfaces
has been established in~\cite{CV,Amb}, both in the geometric sense
and in the Gamma--convergence framework.

Besides their neat geometric motivation,
such nonlocal minimal surfaces also arise
as limit interfaces of nonlocal phase segregation
problems, see~\cite{SaV1, SaV2}.

The main difficulty in the framework we consider is, of course,
the nonlocal aspect of the contributions
in the functional.
The counterpart of this difficulty, however, is given by the fact that
the functional is well defined for every (measurable) set
-- in particular,
there is no need to introduce Caccioppoli sets in this
case.
Nevertheless,
in spite of the results of~\cite{CRS, CV, Amb},
several regularity issues for $s$-minimizers
are still open.

The purpose of this paper is to develop
some regularity theory when~$s$ is close
to~$1$ by a compactness argument,
taking advantage of the 
regularity
theory of the classical minimal surfaces.
Our main result is the following improvement of flatness:

\begin{theorem}\label{MAIN}
Let~$s_o\in (0,1)$, $\alpha\in(0,1)$ and~$s\in[s_o,1)$.
Let~$E$ be~$s$-minimal in~$B_1$. There exists~$\varepsilon_\flat>0$,
possibly depending on~$n$, $s_o$ and~$\alpha$, but independent
of~$s$, such that if
\begin{equation}\label{main trap}
\partial E\cap B_1\subseteq \{ |x\cdot e_n|\le \varepsilon_\flat\}
\end{equation}
then~$\partial E$ is a~$C^{1,\alpha}$-graph 
in the~$e_n$-direction.
\end{theorem}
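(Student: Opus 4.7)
My approach would be the De Giorgi--Savin improvement of flatness scheme, executed by a compactness/contradiction argument, but with all constants tracked uniformly in $s\in[s_o,1)$. The theorem should follow by iteration from a single-step statement: there exist $\varepsilon_0,\eta\in(0,1)$ and $C>0$, independent of $s\in[s_o,1)$, such that whenever $E$ is $s$-minimal in $B_1$ with $\partial E\cap B_1\subseteq\{|x\cdot e|\le\varepsilon\}$ for some unit vector $e$ and some $\varepsilon\le\varepsilon_0$, there is a unit vector $e'$ with $|e-e'|\le C\varepsilon$ and
$$\partial E\cap B_\eta\subseteq \bigl\{|x\cdot e'|\le (\eta/2)\,\varepsilon\bigr\}.$$
Iterating this one-step improvement on nested balls produces a tangent hyperplane at every boundary point together with a H\"older modulus of the normal direction, yielding the $C^{1,\alpha}$ graph property for any prescribed $\alpha\in(0,1)$, provided $\eta$ and $C$ are chosen appropriately.

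\textbf{Compactness set-up.} I would argue the single-step statement by contradiction: assume there exist $s_k\in[s_o,1)$, $s_k$-minimal sets $E_k$ in $B_1$, and $\varepsilon_k\downarrow 0$ satisfying the flatness hypothesis in direction $e_n$ but for which the improvement fails for any fixed small $\eta$. Uniform density and clean-ball estimates (available uniformly for $s\ge s_o$, and stable as $s\to 1^-$ by the results of \cite{CRS,CV}) together with the flatness assumption imply that, once $\varepsilon_k$ is small, $\partial E_k\cap B_{3/4}$ is the graph $\{x_n=u_k(x')\}$ of a function $u_k:B'_{3/4}\to\R$ with $\|u_k\|_\infty\le\varepsilon_k$. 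Rescaling vertically, set $v_k:=u_k/\varepsilon_k$, so $\|v_k\|_\infty\le1$.

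\textbf{Passage to the limit and dichotomy.} The key step is a uniform H\"older estimate for $v_k$ with constants independent of $s\in[s_o,1)$, obtained from a Harnack-type inequality for the linearized graph equation satisfied by $u_k$; this lets me extract a subsequence $v_k\to v_\infty$ locally uniformly and $s_k\to s_\infty\in[s_o,1]$. If $s_\infty\in[s_o,1)$, passing to the limit in the Euler--Lagrange equation yields that $v_\infty$ solves a linear nonlocal equation of order $1+s_\infty$, whose interior regularity forces $v_\infty$ to be arbitrarily close to a linear function at small scales; this contradicts the failure of the one-step improvement. If $s_\infty=1$, I would combine the CV convergence of $s_k$-minimizers to classical minimal surfaces with the rescaling to identify $v_\infty$ as a bounded solution of the classical linearized minimal surface equation, i.e.\ $\Delta v_\infty=0$; De~Giorgi's classical improvement of flatness then provides the same contradiction.

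\textbf{Main obstacle.} The heart of the difficulty is precisely the $s$-uniformity in the compactness step. The nonlocal operator governing the linearization has order $1+s$, and as $s\to 1^-$ its kernel degenerates to a local one, so naive estimates lose control. I expect the hardest single ingredient to be a \emph{Harnack inequality for the graph function $v_k$ with constants independent of $s$ for $s\in[s_o,1)$}, matching in the limit both the fractional Harnack inequality for $s<1$ and the classical Harnack inequality at $s=1$. A second delicate point is that the geometric convergence of $\partial E_k$ to a hyperplane must be strengthened from $L^1_{\rm loc}$/Hausdorff convergence (from \cite{CV}) to convergence of the rescaled graphs $v_k$; this requires nonlocal barrier constructions whose effective range is uniform as $s\to 1^-$, so that vertical oscillations of $\partial E_k$ cannot concentrate at scales smaller than $\varepsilon_k$.
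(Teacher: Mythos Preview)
Your architecture is the right one, and in fact coincides with the paper's: once a uniform-in-$s$ Harnack/oscillation-decay statement is in hand, the remaining vertical-rescaling-to-a-harmonic-limit argument is exactly the one in \cite{CRS}, Section~6 (from Lemma~6.10 on), to which the paper simply defers. You also correctly identify the uniform Harnack as the crux. But the proposal leaves that crux as a black box, and there is a separate concrete error earlier.

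The error is the claim that density and clean-ball estimates, together with flatness, force $\partial E_k\cap B_{3/4}$ to be a \emph{graph} of some $u_k$. No such implication is available for $s$-minimal sets a priori; graphicality is a conclusion of the improvement of flatness, not an input to it. The paper never assumes this. Instead it replaces $\partial E_j$ by the two level sets $\mathcal{S}^\pm_j=\{d_{E_j}=\pm\delta_j\}$ of the signed distance, which \emph{are} Lipschitz graphs $u^\pm_j$ by an elementary sup-convolution argument (Caffarelli--C\'ordoba); these sandwich $\partial E_j$ and inherit the Euler--Lagrange inequality by set inclusion. All the analysis is then done on $u^\pm_j$.

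As for the uniform Harnack itself, the paper does not obtain it by passing to a linearized limit. It proves a measure-level ABP estimate with constants independent of $s$ (via a ring-detachment lemma \`a la Caffarelli--Silvestre, a barrier whose nonlocal curvature is controlled by a quantitative ``$s$-curvature $\to$ mean curvature'' lemma, and the convex-envelope machinery), and then feeds this into a Calder\'on--Zygmund dyadic iteration with a two-scale dichotomy: on cubes of side $\gtrsim a_j/\varepsilon^\star$ the ABP step is re-applied, while on smaller cubes one uses instead the proximity (from \cite{CV}) of $E_j$ to a classical minimal surface, whose regularity supplies the needed rigidity. This interplay of scales is precisely what makes the constants stable as $s\to1^-$, and it is the content your proposal would need to supply.
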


The crucial part of Theorem~\ref{MAIN} is that
its flatness threshold~$\varepsilon_\flat$ is independent of~$s$
as~$s\rightarrow1^-$: in fact, for a fixed~$s$,
an improvement of flatness whose threshold
depends on~$s$ has been obtained in~\cite{CRS}
(see Theorem~6.1 there).
The techniques used to prove Theorem~\ref{MAIN}
(hence to obtain a threshold independently of~$s$
as~$s\rightarrow1^-$)
are a uniform
measure estimate for the oscillation,
and a Calder{\'o}n--Zygmund iteration.
Both these tools have somewhat a classical flavor, but
they need to be appropriately, and deeply, modified here:
in particular, some fine estimates
performed in~\cite{CS} turn out to be very useful
here in order to obtain bounds that are independent of~$s$,
and the iteration is not straightforward, but it has to
distinguish two cases according to the size of the cubes
involved, and the technical difficulties arising in the course
of the proof turn out to be quite challenging.

As a consequence of Theorem~\ref{MAIN},
we obtain several regularity and rigidity results
for $s$-minimal surfaces, such as:

\begin{theorem}\label{-2-}
Let~$n\le 7$.

There exists~$\epsilon_o>0$ such that if~$s\in(1-\epsilon_o,1)$
then any $s$-minimal cone is a hyperplane.
\end{theorem}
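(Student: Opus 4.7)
I would argue by contradiction using a compactness argument that exploits the $s$-independence of the flatness threshold in Theorem~\ref{MAIN}. Suppose the conclusion fails: then there exist sequences $s_k\nearrow 1$ and $s_k$-minimal cones $E_k$ in $\R^n$ (with $\lambda E_k=E_k$ for every $\lambda>0$) that are not half-spaces.

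The first step is to extract a classical minimal cone in the limit. By the convergence results of~\cite{CV} and the $\Gamma$-convergence of $(1-s)\,\J_s$ to the classical perimeter as $s\to 1^-$ obtained in~\cite{Amb}, one can pass to a subsequence and find a set $E_\infty$ of locally finite perimeter such that $E_k\to E_\infty$ in $L^1_{loc}(\R^n)$, with $E_\infty$ a local minimizer of the classical perimeter. The cone identity $\lambda E_k=E_k$ passes to the $L^1_{loc}$ limit, so $E_\infty$ is a classical minimal cone. Since $n\le 7$, the Simons/Bombieri--De Giorgi--Giusti theorem then forces $E_\infty$ to be a half-space; after a rotation, $E_\infty=\{x\cdot e_n<0\}$.

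The second step transfers flatness from the limit back to the $E_k$. Combining the $L^1_{loc}$ convergence with the density estimates for $s$-minimizers established in~\cite{CRS} (uniform in $s$ bounded away from $0$) yields local Hausdorff convergence $\partial E_k\to\partial E_\infty=\{x_n=0\}$ on $\overline{B_1}$. In particular, for all sufficiently large $k$,
$$\partial E_k\cap B_1\subseteq\{|x\cdot e_n|\le\varepsilon_\flat\},$$
where $\varepsilon_\flat$ is the threshold furnished by Theorem~\ref{MAIN} applied with, say, $s_o=1/2$ and some fixed $\alpha\in(0,1)$. The crucial point is that $\varepsilon_\flat$ is independent of $s\in[s_o,1)$, so a single threshold works uniformly for all large $k$. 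Theorem~\ref{MAIN} then gives that $\partial E_k\cap B_1$ is a $C^{1,\alpha}$-graph $\{x_n=u_k(x')\}$ in the $e_n$-direction.

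Finally, the cone structure collapses $u_k$ to a linear function. Since $E_k$ is a cone, $u_k$ is positively $1$-homogeneous, hence $\nabla u_k$ is $0$-homogeneous, i.e.\ constant along each ray through the origin. The $C^{0,\alpha}$ regularity of $\nabla u_k$ at the origin then forces $\nabla u_k\equiv\nabla u_k(0)$, so $u_k$ is affine and $E_k$ is a half-space, contradicting the choice of the $E_k$. The main obstacle is the compactness step: one must invoke the $s\to 1^-$ convergence of $s$-minimizers in a form that simultaneously preserves the cone identity and upgrades $L^1_{loc}$-convergence to Hausdorff convergence of boundaries, so that the flatness hypothesis of Theorem~\ref{MAIN} can be verified with the uniform constant~$\varepsilon_\flat$. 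Once the limit picture is in place, Simons' theorem and the rigidity of $1$-homogeneous $C^{1,\alpha}$ graphs close the argument immediately.
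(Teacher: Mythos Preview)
Your proof is correct and follows the same approach as the paper: contradiction, compactness via~\cite{CV} to a classical minimal cone, Simons' theorem in dimension $n\le 7$ to identify the limit as a half-space, and then Theorem~\ref{MAIN} with its $s$-uniform threshold~$\varepsilon_\flat$ to force each $E_k$ to be a hyperplane. The only cosmetic differences are that the paper first invokes dimensional reduction (Theorem~10.3 of~\cite{CRS}) to assume $E_k$ is singular at the origin, whereas you instead spell out explicitly the $1$-homogeneity argument showing that a $C^{1,\alpha}$ cone must be flat; neither alters the substance of the proof.
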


\begin{theorem}\label{-3-}
Let~$n\le 7$.

There exists~$\epsilon_o>0$ such that if~$s\in(1-\epsilon_o,1)$
then any $s$-minimal set is locally a $C^{1,\alpha}$-hypersurface.
\end{theorem}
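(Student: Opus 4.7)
The plan is to derive Theorem \ref{-3-} from the cone classification (Theorem \ref{-2-}) together with the flatness improvement (Theorem \ref{MAIN}) by a standard blow-up / dimension-reduction argument. I would fix $\epsilon_o$ to be at most the threshold supplied by Theorem \ref{-2-}, so that in dimension $n\le 7$ and for every $s\in(1-\epsilon_o,1)$ every $s$-minimal cone is a hyperplane, and simultaneously set $s_o:=1-\epsilon_o$ in Theorem \ref{MAIN} to secure a flatness threshold $\varepsilon_\flat=\varepsilon_\flat(n,s_o,\alpha)$ that does not deteriorate as $s\to 1^-$.

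Fix an $s$-minimal set $E$ in some open set $U$ and a boundary point $x_0\in\partial E\cap U$. For small $r>0$, consider the rescalings $E_r:=(E-x_0)/r$: the kernel $|x-y|^{-(n+s)}$ makes $\J_s$ homogeneous of degree $n-s$ under dilations, so $E_r$ is $s$-minimal in every ball $B_R$ with $rR<\operatorname{dist}(x_0,\partial U)$. By the uniform density estimates and $L^1_{\rm loc}$-compactness of $s$-minimizers from~\cite{CRS}, along any sequence $r_k\downarrow 0$ one can extract a subsequence with $E_{r_k}\to E_0$ in $L^1_{\rm loc}(\R^n)$ and $\partial E_{r_k}\to\partial E_0$ in local Hausdorff distance, with $E_0$ an $s$-minimal set in the whole space. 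The monotonicity formula of~\cite{CRS} then forces $E_0$ to be a cone, and Theorem~\ref{-2-} identifies this cone as a halfspace $\{x\cdot e_n<0\}$ (after a rotation depending on $x_0$ and on the subsequence).

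Since $\partial E_r\to\partial E_0$ in local Hausdorff distance, for any $\delta>0$ one can find $r=r(x_0,\delta)>0$ with
\[ \partial E_r\cap B_1\subseteq\{|x\cdot e_n|\le\delta\}. \]
Choosing $\delta=\varepsilon_\flat$ and applying Theorem~\ref{MAIN} to $E_r$ yields that $\partial E_r$ is a $C^{1,\alpha}$-graph in the $e_n$-direction on $B_{1/2}$, and undoing the scaling delivers a $C^{1,\alpha}$ graph representation of $\partial E$ in a neighborhood of $x_0$. Since $x_0$ is arbitrary, $\partial E$ is locally a $C^{1,\alpha}$-hypersurface, as claimed.

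The main conceptual obstacle is the uniformity in~$s$: every ingredient above (compactness, density bounds, monotonicity, cone classification and flatness improvement) must come with constants that do not degenerate as $s\to1^-$. The crucial uniform input is exactly the $s$-independence of $\varepsilon_\flat$ in Theorem~\ref{MAIN}; the dimensional restriction $n\le 7$ enters only through Theorem~\ref{-2-}. The uniform density and compactness estimates needed to run the blow-up have essentially been established in~\cite{CRS}; the remaining technical task is to track their constants carefully on the range $s\in[s_o,1)$ and then take $\epsilon_o$ to be the minimum of the thresholds produced by Theorems~\ref{MAIN} and~\ref{-2-}.
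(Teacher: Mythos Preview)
Your proposal is correct and follows the same blow-up strategy as the paper: blow up at a boundary point, obtain an $s$-minimal tangent cone (via the monotonicity formula/Theorem~9.2 of~\cite{CRS}), identify it as a halfspace by Theorem~\ref{-2-}, and conclude $C^{1,\alpha}$-regularity from flatness. The paper packages the last step by citing Theorem~9.4 of~\cite{CRS} rather than invoking Theorem~\ref{MAIN} directly, but this is the same mechanism.

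One clarification: your final paragraph overstates the uniformity requirement. Once $\epsilon_o$ is fixed by Theorem~\ref{-2-} and you are proving regularity of a \emph{given} $s$-minimal set for a \emph{fixed} $s\in(1-\epsilon_o,1)$, the compactness, density, monotonicity, and improvement-of-flatness results from~\cite{CRS} apply with constants depending on that fixed $s$, and that is all you need. The uniformity in $s$ is essential only in the proof of Theorem~\ref{-2-} (and of Theorem~\ref{MAIN} itself); it is not needed again here. So you need not verify uniform-in-$s$ versions of the \cite{CRS} blow-up machinery for this argument to go through.
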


\begin{theorem}\label{GG1}
Let~$n=8$.

There exists~$\epsilon_o>0$ such that if~$s\in(1-\epsilon_o,1)$
then any $s$-minimal set is locally~$C^{1,\alpha}$,
everywhere except, at most, at countably many isolated points.
\end{theorem}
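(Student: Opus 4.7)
The plan is a Federer-style dimension reduction, taking Theorem~\ref{-2-} as the base and using Theorem~\ref{MAIN} to pass from flatness of blow-ups to smoothness of $\partial E$. I would fix $\alpha \in (0,1)$ and choose $\epsilon_o$ small enough that Theorem~\ref{-2-} is available in ambient dimension~$7$ and Theorem~\ref{MAIN} is available for the chosen $\alpha$, uniformly for $s \in (1-\epsilon_o, 1)$. For such an $s$ and any $s$-minimal set $E$ in an open set $\Omega \subseteq \R^8$, let $\Sigma \subseteq \partial E \cap \Omega$ denote the points where $\partial E$ is not a $C^{1,\alpha}$-hypersurface. By Theorem~\ref{MAIN}, $x_0 \in (\partial E \cap \Omega) \setminus \Sigma$ whenever, in some small ball $B_r(x_0)$, $\partial E$ satisfies the flatness condition \eqref{main trap} (after rescaling and rotation) for the threshold $\varepsilon_\flat$. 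Hence the goal reduces to showing that $\Sigma$ has no accumulation point in $\Omega$; since $\Sigma$ is closed in $\Omega$ (regularity being an open property), discreteness will force $\Sigma$ to be at most countable.

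The first step is the standard blow-up analysis for $s$-minimizers. Given $x_0 \in \partial E$, the rescalings $E_r := (E - x_0)/r$ are $s$-minimal in enlarging balls, so by the $L^1_{\mathrm{loc}}$ compactness of $s$-minimizers from~\cite{CRS}, along a subsequence $r_k \to 0$ the sets $E_{r_k}$ converge to an $s$-minimal cone $C \subseteq \R^8$. If $x_0 \in \Sigma$ then $C$ cannot be a hyperplane: otherwise \eqref{main trap} would hold for $E$ in a small ball around $x_0$ and Theorem~\ref{MAIN} would place $x_0$ outside $\Sigma$. Moreover, singularity is preserved in the limit: if $y_k \in \Sigma(E_{r_k})$ and $y_k \to y$, then $y \in \Sigma(C)$, for otherwise flatness of $\partial C$ near $y$ would transfer, via $L^1_{\mathrm{loc}}$ convergence, to $\partial E_{r_k}$ near $y_k$ for large $k$, contradicting Theorem~\ref{MAIN}.

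Now suppose, for contradiction, that some $x_0 \in \Sigma$ is an accumulation point of $\Sigma$, with $x_k \in \Sigma \setminus \{x_0\}$ and $x_k \to x_0$. Set $r_k := |x_k - x_0|$ and $e_k := (x_k - x_0)/r_k$, and pass to a subsequence so that $e_k \to e \in S^7$ and $E_{r_k} \to C$ for some nonplanar $s$-minimal cone $C$; by the preceding semicontinuity observation, $e \in \Sigma(C)$. Next I would blow up $C$ at $e$: writing $C^\rho := (C - e)/\rho$ and using the cone identity $\lambda C = C$, a direct computation yields $C^\rho + t e = C^{\rho/(1 - \rho t)}$ for small $\rho$, so any blow-up limit $\hat C = \lim C^{\rho_k}$ satisfies $\hat C + t e = \hat C$ for every $t \in \R$. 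Therefore $\hat C$ factorizes as $\hat C = C' \times \R e$ for some $s$-minimal cone $C' \subseteq e^\perp \cong \R^7$. Theorem~\ref{-2-} in dimension~$7$ forces $C'$ to be a hyperplane, hence $\hat C$ is a hyperplane in $\R^8$; Theorem~\ref{MAIN} then gives $e \notin \Sigma(C)$, a contradiction.

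The principal technical obstacle is the tacit reduction of the $s$-minimality of the product cone $C' \times \R e$ in $\R^8$ to the $s$-minimality of $C'$ in $\R^7$: because the kernel $|x-y|^{-(n+s)}$ couples all coordinates, the functional $\J_s$ does not split under Cartesian products with a line as the classical perimeter does, so this reduction must be justified through a cylindrical comparison argument together with the scaling properties of $\mathcal{L}$. The same mechanism already underlies Theorem~\ref{-2-}, and once it is in hand the remainder of the argument above is a direct adaptation of Federer's classical dimension reduction.
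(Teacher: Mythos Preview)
Your proposal is correct and follows essentially the same Federer-style dimension reduction that the paper invokes: the paper's own proof simply refers to Theorems~11.7--11.8 of~\cite{Giusti} for the structure you spelled out, with Theorem~10.3 of~\cite{CRS} supplying the nonlocal dimensional reduction and Theorem~\ref{-2-} the low-dimensional base case. The ``principal technical obstacle'' you flagged---passing from the cylinder $C'\times\R e$ to $s$-minimality of $C'$ in $\R^7$---is precisely the content of Theorem~10.3 in~\cite{CRS}, which the paper cites rather than reproves.
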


\begin{theorem}\label{GG2}
There exists~$\epsilon_o>0$ such that if~$s\in(1-\epsilon_o,1)$
then any $s$-minimal set is locally~$C^{1,\alpha}$
outside a closed set~$\Sigma$, with~$
{\mathcal{H}}^d(\Sigma)=0$ for any~$d>n-8$.
\end{theorem}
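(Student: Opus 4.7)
The plan is to follow the classical Federer dimension-reduction scheme, with Theorem~\ref{MAIN} playing the role of De~Giorgi's improvement of flatness and Theorem~\ref{-2-} serving as the low-dimensional rigidity input. First I would fix $\epsilon_o>0$ small enough that for every $s\in(1-\epsilon_o,1)$ and every auxiliary ambient dimension $n'\in\{2,\dots,7\}$, Theorem~\ref{MAIN} yields an $s$-uniform flatness threshold $\varepsilon_\flat>0$ and Theorem~\ref{-2-} forces every $s$-minimal cone in $\R^{n'}$ to be a hyperplane. Given an $s$-minimal set $E$ with $s$ in this range, I define $\Sigma\subseteq\partial E$ to be the set of points at which $\partial E$ is not a $C^{1,\alpha}$-graph in any neighborhood, so the goal is to show $\mathcal{H}^d(\Sigma)=0$ for every $d>n-8$.

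The first step is to characterize $\Sigma$ via blow-ups. For any $x_0\in\partial E$ the rescalings $E_r:=(E-x_0)/r$ are $s$-minimal with uniform density estimates, so by the $L^1_{\rm loc}$-compactness theory from~\cite{CRS} every sequence $r_k\to 0^+$ admits a subsequence along which $E_{r_k}$ converges in $L^1_{\rm loc}$ to an $s$-minimal tangent cone $C$, with boundaries converging in Hausdorff distance. If some such $C$ is a hyperplane, then for small $r$ the set $E_r$ satisfies \eqref{main trap}, hence Theorem~\ref{MAIN} yields $x_0\notin\Sigma$; the $s$-uniformity of $\varepsilon_\flat$ is essential here, since it makes the flatness criterion iterable across scales. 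In particular every $x_0\in\Sigma$ admits only non-flat $s$-minimal tangent cones, $\Sigma$ is closed, and the set-valued assignment $E\mapsto\Sigma(E)$ is upper semicontinuous under the blow-up convergence just established.

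Next I would execute the Federer iteration. Suppose, for contradiction, $\mathcal{H}^d(\Sigma)>0$ for some $d>n-8$. At an $\mathcal{H}^d$-density point of $\Sigma$, upper semicontinuity produces a tangent cone $C_1$ whose own singular set has positive $\mathcal{H}^d$-measure. Because $C_1$ is a cone, $\mathrm{Sing}(C_1)$ is a cone as well and therefore contains a point $y_1\neq 0$; blowing up $C_1$ at $y_1$ yields an $s$-minimal cone that is translation-invariant along the direction of $y_1$, hence of the form $C_2'\times\R$ with $C_2'\subseteq\R^{n-1}$ still $s$-minimal with the same parameter $s$ (integrating the kernel $|x-y|^{-(n+s)}$ over the invariant axis yields $|x'-y'|^{-((n-1)+s)}$ up to a dimensional constant, so the reduced functional is of the same type in one fewer dimension). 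Each such reduction lowers both the ambient dimension and the singular-set dimension by one, so iterating produces a singular $s$-minimal cone in ambient dimension $\leq 7$ after finitely many steps, contradicting Theorem~\ref{-2-}.

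The main obstacle will be executing the blow-up stability, density-point selection, and splitting step uniformly in the nonlocal setting. In particular one must verify that a single $\epsilon_o$ suffices across all ambient dimensions $n'\leq 7$ visited during the iteration; that blow-ups of $s$-minimal cones at non-vertex singular points genuinely converge to cones carrying full translation-invariance along the corresponding direction, so that the product decomposition $C_2'\times\R$ is rigorous and not merely heuristic; and that the integration-of-the-kernel step is legitimately compatible with the notion of $s$-minimality (i.e.\ that the induced functional on $\R^{n-1}$ coincides with $\J_s$ on cylinder competitors). Granting these ingredients, the Federer counting delivers $\mathcal{H}^d(\Sigma)=0$ for every $d>n-8$.
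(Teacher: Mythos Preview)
Your proposal is correct and follows essentially the same approach as the paper: the paper's proof simply states that the argument proceeds verbatim as in Theorems~11.7--11.8 of~\cite{Giusti}, with the dimensional reduction supplied by Theorem~10.3 of~\cite{CRS} and the low-dimensional rigidity by Theorem~\ref{-2-}. The concerns you flag about the splitting step and uniformity of~$\epsilon_o$ across dimensions are precisely what Theorem~10.3 of~\cite{CRS} and the finiteness of the set~$\{2,\dots,7\}$ handle.
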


For other recent regularity results for
nonlocal minimal surfaces see~\cite{Bar, SaV3}.
The organization of the paper is
displayed by the following table:

\tableofcontents

\section{Notation}\label{notation}

A point~$x\in \R^n$ will be often written
in coordinates as~$x=(x',x_n)\in\R^{n-1}
\times\R$.

The complement of a set~$\Omega\subseteq\R^n$
will be denoted by~$\CC \Omega:=\R^n\setminus\Omega$.
For any $P\in\R^n$ and $\rho>0$,
we define the cylinder
$$ K_\rho(P):= \{ |x'-P'|<\rho\}\times
\{ |x_n - P_n|<\rho\}.$$
We also set~$K_\rho:=K_\rho(0)$.

The $(n-1)$-dimensional
cube of side~$R$ centered at~$x_o'\in\R^{n-1}$ will
be denoted by~$Q_R(x_o)$.

If~$\nu\in{\rm S}^{n-1}$, given~$x\in\R^n$, we define its
projection along~$\nu$, that is~$\pi_\nu x:=
x-(x\cdot \nu)\nu$.


Given a set~$E\subset\R^n$, we denote
by~${d}_E (x)$ the signed
distance of a point~$x\in\R^n$; we will
take the sign convention that~${d}_E (x)\ge0$
if~$x\in\CC E$.

If~$\Sigma\subset\R^n$ is a $C^2$-portion of
hypersurface, we define~${\mathcal{H}}(P)$
to be the mean curvature of~$\Sigma$ at~$P$
(with the convention that~${\mathcal{H}}$
equals the sum
of all the principal curvatures).

The $k$-dimensional Lebesgue measure of a (measurable) set~$A\subseteq
\R^k$ will be denoted by~$|A|$.

We let~$\varpi$ be the $(n-2)$-dimensional Hausdorff measure of
the boundary of the~$(n-1)$-dimensional unit ball.

Often, we will denote by~$c$, $C$ a suitable
positive constant, that we 
allow ourselves the latitude of renaming at each
step of the computation.

\section{Proof of Theorem~\ref{MAIN}}

Now we start the proof of Theorem~\ref{MAIN},
which is based on several steps. 

First, we need
to approximate our $s$-minimal surface with a graph.
As soon as~$s$ approaches~$1$, a flat $s$-minimal surface
approach a classical, smooth, minimal surface,
and this will allow us to keep the Lipschitz norm
of this approximating graph under control.

Then, we perform an estimate on the detachment
of this graph from its tangent hyperplane: this bound
(together with a suitable auxiliary function and an estimate relating
the integral equation with the classical mean curvature
equation in the limit) provides
an Alexandrov-Bakelman-Pucci type theory
that controls the oscillation of the graph in measure.

This may be repeated at finer and finer scales via
dyadic decomposition, by possibly taking advantage
of the closeness to the smooth minimal surface
when the size of the cubes become too small. In this
way, one obtains a pointwise control on the oscillation
of the approximating graph (and so of the original
$s$-minimal surface), leading to the proof of Theorem~\ref{MAIN}.

Below are the full details or the proof.

\subsection{Building a graph via the distance function}

One of the difficulties of our framework
is that the~$s$-minimal surfaces we are dealing with
are not necessarily graphs. To get around this
problem, we follow an idea of~\cite{CC} and we consider 
level sets of the distance function in an appropriate scaling
(this may be seen as a sup-convolution technique).

For this, 
we recall the following classical
geometric observation
on the regularity of the level sets of
the distance function:

\begin{lemma}\label{CC1L}
Let~$E\subset\R^n$.
Assume that
\begin{equation}\label{Gt}
\{ x_n\le -\gamma\} \cap K_r\subseteq
E\cap K_r \subseteq \{ x_n\le \gamma\}
\cap K_r,\end{equation}
for some~$r>\gamma>0$.

Let~$\delta\in(0,r/4)$ and~$
{\mathcal{S}}^\pm:=\{ x\in \R^n {\mbox{ s.t. }} d_E(x)=
\pm\delta\}$.

Then, there exist~$c\in(0,1)$ and~$C\in(1,+\infty)$
such that if~$\gamma/\delta<c$
then~${\mathcal{S}}^\pm\cap K_{r-2\delta}$
is a Lipschitz graph in the~$n$th direction with
Lipschitz constant bounded by~$C\sqrt{\gamma/\delta}$.

Furthermore,~${\mathcal{S}}^-$
(resp.,~${\mathcal{S}}^+$) may be touched
at any point of~$K_{r-2\delta}$
by a tangent paraboloid from above (resp., below).
\end{lemma}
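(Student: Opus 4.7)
The plan is to exploit the sup-convolution structure of the distance level sets: $\mathcal{S}^+$ bounds $\{d_E\le\delta\}=\bigcup_{y\in E}\overline{B_\delta(y)}$, hence it carries at every point an interior tangent ball of radius $\delta$ from the $\{d_E<\delta\}$ side. Under the trap hypothesis \eqref{Gt}, these tangent balls will be shown to have outward normals uniformly close to $e_n$, which simultaneously yields the Lipschitz graph property with constant $O(\sqrt{\gamma/\delta})$ and the one-sided paraboloid touching. The claim for $\mathcal{S}^-$ will then follow by applying the same analysis to $\CC E$, so I focus on $\mathcal{S}^+$.

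First, for $x\in\mathcal{S}^+\cap K_{r-2\delta}$, since $d_E(x)=\delta$ the open ball $B_\delta(x)\subset\CC E$ lies in $K_{r-\delta}\subset K_r$; the lower trap inclusion forbids it from meeting $\{x_n\le-\gamma\}$, so $x_n\ge\delta-\gamma$. Picking $y\in\partial E$ with $|x-y|=\delta$ (such $y$ necessarily lies in $K_{r-\delta}\subset K_r$), the upper trap inclusion gives $|y_n|\le\gamma$, hence $x_n\le\delta+\gamma$ and $(x-y)_n\ge\delta-2\gamma$. Combined with $|x-y|=\delta$, this yields $|(x-y)'|^2\le 4\gamma\delta$. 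Since $B_\delta(y)\subset\{d_E<\delta\}$ and $x\in\partial B_\delta(y)\cap\mathcal{S}^+$, one obtains an interior tangent ball at $x$ whose outward unit normal $\nu=(x-y)/\delta$ satisfies $\nu_n\ge 1-2\gamma/\delta$ and $|\nu'|\le 2\sqrt{\gamma/\delta}$.

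Next I would deduce the graph and Lipschitz properties from this uniform tangent ball. If $x_1,x_2\in\mathcal{S}^+\cap K_{r-2\delta}$ shared the same projection with $a=x_{1,n}-x_{2,n}>0$, expanding $|x_2-y_1|^2=\delta^2-2a\delta\nu_{1,n}+a^2$ together with the constraint $|x_2-y_1|\ge\delta$ would force $a\ge 2\delta\nu_{1,n}\ge 2(\delta-2\gamma)$, contradicting the layer bound $a\le 2\gamma$ once $\gamma/\delta$ is below a universal threshold $c$. For arbitrary pairs, the same inequality $|x_2-y_1|^2\ge\delta^2$ and its symmetric counterpart at $x_2$ give $\pm\nu_j\cdot(x_2-x_1)\ge-|x_2-x_1|^2/(2\delta)$; combined with the normal-cone estimate and the layer bound $|x_{2,n}-x_{1,n}|\le 2\gamma$, splitting into the regimes $|x_2'-x_1'|\le\sqrt{\gamma\delta}$ (where the cone bound dominates) and $|x_2'-x_1'|\ge\sqrt{\gamma\delta}$ (where the a priori layer bound already gives the estimate) allows one to absorb the quadratic remainder and conclude $|x_{2,n}-x_{1,n}|\le C\sqrt{\gamma/\delta}\,|x_2'-x_1'|$.

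Finally, the interior tangent ball $B_\delta(y)\subset\{d_E<\delta\}$ sits below $\mathcal{S}^+$ in the $e_n$-direction under the trap, so its boundary touches $\mathcal{S}^+$ at $x$ from below and supplies a tangent paraboloid from below; applying the same reasoning to $\CC E$ yields the paraboloid from above for $\mathcal{S}^-$. The hard part will be the quadratic-remainder absorption in the Lipschitz step: a single interior-ball inequality constrains $\nu\cdot(x_2-x_1)$ only up to a quadratic error of order $|x_2-x_1|^2/\delta$, and extracting a clean $\sqrt{\gamma/\delta}$ Lipschitz constant requires simultaneously invoking the tangent balls at both endpoints, the normal-cone bound, and the a priori layer bound, with the two-scale split on $|x_2'-x_1'|$ to avoid a vacuous estimate at small separations.
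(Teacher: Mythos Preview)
Your proposal is correct and follows essentially the same approach as the paper's proof: both arguments extract from the trap hypothesis that the interior tangent ball at each point of $\mathcal{S}^\pm$ has outward normal within $O(\sqrt{\gamma/\delta})$ of $e_n$, and then split the Lipschitz estimate into the two regimes $|x_1'-x_2'|\gtrless\sqrt{\gamma\delta}$, using the a~priori layer bound $|x_{1,n}-x_{2,n}|\le 2\gamma$ in the large-separation case and the tangent-ball exclusion inequality in the small-separation case. The only cosmetic difference is that the paper works with $\mathcal{S}^-$, proves the one-sided inequality $x_n-z_n\le C\sqrt{\gamma/\delta}\,|x'-z'|$ by explicitly constructing a comparison point $p$ on $\partial B_\delta(y(z))$ above $x$, and then swaps $x$ and $z$, whereas you phrase the same geometry via the quadratic inequality $\nu_j\cdot(x_2-x_1)\ge -|x_2-x_1|^2/(2\delta)$ at both endpoints; the content is identical.
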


\begin{proof} We focus on~${\mathcal{S}}^-$, the
case of~${\mathcal{S}}^+$ being analogous.
We would like to show that
for any~$x$, $z\in {\mathcal{S}}^-\cap K_{r-2\delta}$
\begin{equation}\label{Ch7}
x_n-z_n\le C\sqrt{\frac{\gamma}{\delta}}\, |x'-z'|,
\end{equation}
from which the desired result follows
by possibly exchanging the roles of~$x$ and~$z$.

For this, we argue like this.
For any~$x\in{\mathcal{S}}^-\cap K_{r-2\delta}$,
the ball of radius~$\delta$
centered at~$x$
is tangent to~$\partial E$ at some point~$y(x)\in \partial E
\cap K_r$, and,
conversely, 
\begin{equation}\label{y503}{\mbox{the ball
of radius~$\delta$
centered at~$y(x)$
is tangent to~${\mathcal{S}}^-$ at~$x$.}}
\end{equation}
Let~$e_n:=(0,\dots,1)$. Since~$x+\delta e_n\in B_{
\delta} (x)$, we have that~$x+\delta e_n$
must lie 
in the closure of~$E$.
Hence,
by~\eqref{Gt},
\begin{equation}\label{1.3a}
x_n+\delta\le \gamma.\end{equation}
Similarly, since~$y(x)\in\partial E$, we obtain from~\eqref{Gt}
that
\begin{equation}\label{1.3a-b}
y_n(x)\ge -\gamma.\end{equation}
By~\eqref{1.3a} and~\eqref{1.3a-b},
\begin{equation}\label{X} {y_n(x)-x_n}
\ge \delta-{2\gamma}.\end{equation}
In the same way, we see that
\begin{equation}\label{Xz} {y_n(z)-z_n}
\ge \delta-{2\gamma}.\end{equation}
Now, if~$|x'-z'|\ge \sqrt{\gamma\delta}$, we use~\eqref{Gt}
and~\eqref{X} 
to deduce that
\begin{eqnarray*}
&& x_n-z_n\le (x_n-y_n(x))+|y_n(x)|+|y_n(z)|+|y_n(z)-z_n|
\\&&\qquad\le
(2\gamma-\delta)+\gamma+\gamma+|y(z)-z|
\\&&\qquad\le
(2\gamma-\delta)+\gamma+\gamma+\delta\\
&& \qquad =4\gamma\le 4\sqrt{\frac{\gamma}{\delta}}\,|x'-z'|,
\end{eqnarray*}
which proves~\eqref{Ch7} in this case.

So, we may focus on the case in which
\begin{equation}\label{90c}
|x'-z'|\le\sqrt{\gamma\delta}.\end{equation}
Then, from~\eqref{Xz},
$$ \delta^2=|y(z)-z|^2=|y'(z)-z'|^2+|y_n(z)-z_n|^2
\ge |y'(z)-z'|^2+(\delta-2\gamma)^2,$$
which gives
\begin{equation}\label{90i}
|y'(z)-z'|\le 2\sqrt{\gamma\delta}.\end{equation}
Hence
\begin{equation*}
|x'-y'(z)|\le |x'-z'|+|z'-y'(z)|\le 3\sqrt{\gamma\delta},
\end{equation*}
due to~\eqref{90c}
and~\eqref{90i}, and so, in particular,
\begin{equation}\label{90ttt}
|x'-y'(z)|\le \frac{\delta}{100}.
\end{equation}
So, we can define
\begin{equation}\label{Xz2}
p:= \big( x', y_n(z)-\sqrt{\delta^2-|y'(z)-x'|^2}\big).
\end{equation}
We observe that
\begin{equation}\label{190ttt}
p\in \partial B_\delta(y(z)).\end{equation}
Also, from~\eqref{X}
and~\eqref{Gt},
$$ y_n(z)-x_n\ge y_n(z)-y_n(x)+\delta-2\gamma\ge\delta-4\gamma>0.$$
Therefore,
by~\eqref{90ttt}, we have that~$x$ must be below~$B_\delta(y(z))$,
hence~\eqref{190ttt}
implies that
\begin{equation}\label{78yy0}
x_n\le p_n.\end{equation}
Now, we define~$P:=(p-y(z))/\delta$ and~$Z:=(z-y(z))/\delta$.
We observe that~$P$, $Z\in \partial B_1$, due to~\eqref{190ttt}.
Also, $P_n$, $Z_n\le0$, due to~\eqref{Xz}
and~\eqref{Xz2}. Moreover, $|P'|+|Z'|\le 1/50$
thanks to~\eqref{90i}, \eqref{90ttt}
and~\eqref{Xz2}.
As a consequence
$$ |P_n-Z_n|\le 100\, |P'-Z'|^2.$$
By scaling back, this gives that
$$ |p_n-z_n|\le\frac{100}{\delta}\,|p'-z'|^2=
\frac{100}{\delta}\,|x'-z'|^2 \le
100\sqrt{\frac{\gamma}{\delta}} \,|x'-z'|,$$
where~\eqref{90c} was used once again.
{F}rom this and~\eqref{78yy0}, we infer that
$$ x_n-z_n\le p_n-z_n\le 
100\sqrt{\frac{\gamma}{\delta}} \,|x'-z'|,$$
which gives~\eqref{Ch7} in this case too.

Then, the desired Lipschitz property is
a consequence of~\eqref{Ch7},
and the existence of a tangent paraboloid follows
from~\eqref{y503} (and, by~\eqref{X}, the touching occurs from above in this case).
\end{proof}

We point out that the Lipschitz bound $C\sqrt{\gamma/\delta}$
in Lemma \ref{CC1L} is optimal, as the example in Figure~1 shows.

\begin{figure}[htbp]
\begin{center}
\resizebox{13.2cm}{!}{\input{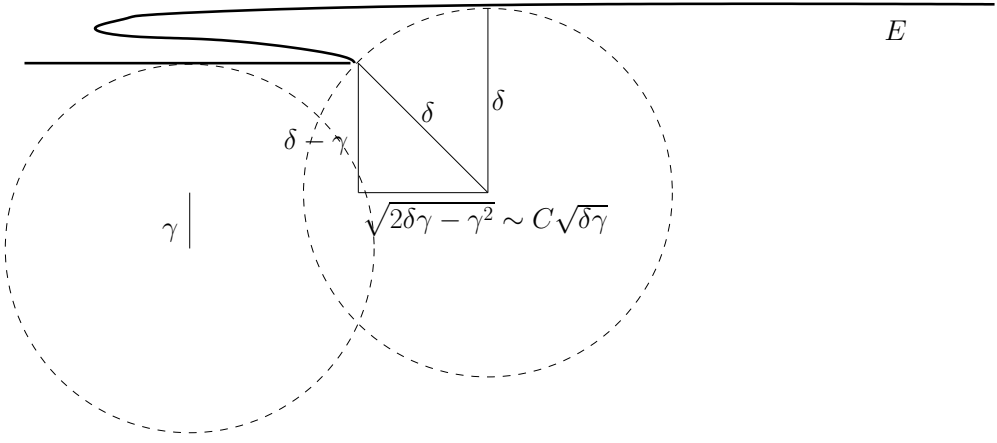}}
{\caption{\it Optimality of the Lipschitz constant
$\sqrt{\gamma/\delta}=\gamma/\sqrt{\delta\gamma}$ in Lemma \ref{CC1L}.}}
\end{center}
\end{figure}

A global version of Lemma~\ref{CC1L} is given by
the following result:

\begin{corollary}\label{CorCC}
Let~$E_\star\subseteq\R^n$. Suppose that~$\partial E_\star\cap K_2$
is a $C^{1,\alpha}$-graph in the $n$th direction, for some~$\alpha>0$,
and let~$M_\star$ be its $C^{1,\alpha}$-norm.

Then, there exists~$c_\star\in(0,1)$,
possibly depending on~$M_\star$,
such that the
following holds.

Let~$\gamma$, $\delta\in (0,1/4)$,~$E\subseteq\R^n$
and suppose that
\begin{equation}\label{n992}
{\mbox{$E\cap K_2$ lies in
a~$\gamma$-neighborhood of~$E_\star$.
}}\end{equation}
Let~$
{\mathcal{S}}^\pm:=\{ x\in \R^n {\mbox{ s.t. }} d_E(x)=
\pm\delta\}$.

Then,~${\mathcal{S}}^\pm\cap K_1$
is a Lipschitz graph in the~$n$th direction,
provided that~$\gamma/\delta<c_\star$, $\delta<c_\star
\gamma^{1/(1+\alpha)}$
and~$\gamma<c_\star$.

More precisely, there exists a constant~$C>1$ for which~$
{\mathcal{S}}^\pm\cap K_1$
is a Lipschitz graph in the~$n$th direction and
the Lipschitz norm of~${\mathcal{S}}^\pm\cap K_1$ is
controlled by~$C\sqrt{\gamma/\delta}+M_o$, where~$M_o$
is the Lipschitz norm
of~$\partial E_\star\cap K_2$.

Furthermore,~${\mathcal{S}}^-$
(resp.,~${\mathcal{S}}^+$) may be touched
at any point of~$K_{1-2\delta}$
by a tangent paraboloid from above (resp., below).
Finally, for any~$|x'|\le 1/2$,
\begin{equation}\label{s8822211a}
u^+(x')-u^-(x')\le 2(2+M_o)(\gamma+\delta)
.\end{equation}
\end{corollary}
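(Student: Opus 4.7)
The plan is to reduce to Lemma~\ref{CC1L} by applying it at each point~$P \in \partial E_\star$ in a cylinder rotated so that $\nu_\star(P)$ (the unit normal to~$\partial E_\star$ at~$P$) becomes vertical, and then to assemble the local estimates using the $C^{1,\alpha}$-regularity of~$\partial E_\star$.

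Since $\partial E_\star \cap K_2$ is a $C^{1,\alpha}$-graph with norm at most~$M_\star$, for any $P \in \partial E_\star \cap K_{3/2}$ the unit normal $\nu_\star(P)$ is well defined, and in a cylinder of radius~$r$ around~$P$ oriented along~$\nu_\star(P)$, the surface $\partial E_\star$ is contained in a slab of half-width $M_\star r^{1+\alpha}$ about the tangent hyperplane at~$P$. By~\eqref{n992}, $\partial E$ then lies in the enlarged slab of half-width $\gamma' := \gamma + M_\star r^{1+\alpha}$ inside the same cylinder. Choosing $r$ proportional to $\gamma^{1/(1+\alpha)}$ (with a constant depending on~$M_\star$) gives $\gamma' \le 2\gamma$, so the hypothesis $\delta < c_\star \gamma^{1/(1+\alpha)}$ (with~$c_\star$ small) yields $\delta < r/4$, while $\gamma/\delta < c_\star$ implies $\gamma'/\delta < c$, the constant of Lemma~\ref{CC1L}.

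Therefore Lemma~\ref{CC1L}, applied in this rotated cylinder, says that $\mathcal{S}^\pm$, restricted to a slightly smaller rotated cylinder about~$P$, is a Lipschitz graph in the direction $\nu_\star(P)$ with Lipschitz norm at most $C\sqrt{\gamma'/\delta} \le C'\sqrt{\gamma/\delta}$, and is touched by paraboloids from above (resp.\ below) at all interior points. Going back to the $e_n$-direction: the rotation sending $\nu_\star(P)$ to~$e_n$ tilts $e_n$ by an angle $\theta_P$ with $\tan\theta_P \le M_o$, so the slope addition formula upgrades the local Lipschitz bound to $C\sqrt{\gamma/\delta} + M_o$ in the $e_n$-direction. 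Since every point of $\mathcal{S}^\pm \cap K_1$ lies within $\gamma + \delta$ of~$\partial E_\star$, a standard covering argument (using rotated cylinders indexed by projections onto $\partial E_\star$, together with the fact that points whose projections are far apart are already comparable via the graph~$\partial E_\star$ itself) promotes the local estimate to the global Lipschitz graph property on~$K_1$. The paraboloid touching statement is local and follows directly from Lemma~\ref{CC1L}.

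Finally, for~\eqref{s8822211a}: since $\gamma < \delta$, the surface $\mathcal{S}^+$ lies entirely above the graph $x_n = f(x')$ of~$\partial E_\star$, and every $x \in \mathcal{S}^+ \cap K_1$ has Euclidean distance at most $\gamma + \delta$ from $\partial E_\star$. A Lipschitz graph of norm~$M_o$ converts Euclidean to vertical distance up to the factor~$1 + M_o$, giving $u^+(x') - f(x') \le (1+M_o)(\gamma+\delta)$, and symmetrically for~$u^-$; adding the two inequalities yields~\eqref{s8822211a}. The main obstacle is the three-scale interplay between $\gamma$, $\delta$, and~$r$: the radius~$r$ must be small enough for~$\partial E_\star$ to look essentially flat at that scale, forcing $r \sim \gamma^{1/(1+\alpha)}$, yet large enough for Lemma~\ref{CC1L} to apply to $\delta$-level sets inside it, and the hypothesis $\delta < c_\star \gamma^{1/(1+\alpha)}$ is exactly the compatibility condition that makes this possible.
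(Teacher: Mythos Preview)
Your proof is correct and follows essentially the same route as the paper: choose the local scale $r\sim(\gamma/M_\star)^{1/(1+\alpha)}$ so that $\partial E_\star$ is $\gamma$-flat in each rotated cylinder, apply Lemma~\ref{CC1L} there to get the Lipschitz and paraboloid-touching properties in the direction $\nu_\star(P)$, and then tilt back to $e_n$ at the cost of the additive $M_o$. For~\eqref{s8822211a} the paper carries out exactly the chain you describe (through the nearest point on $\partial E$ and then on $\partial E_\star$), obtaining the bound $2(2+M_o)(\gamma+\delta)$; your version via the Euclidean-to-vertical conversion factor $(1+M_o)$ is the same computation and in fact gives the slightly sharper $2(1+M_o)(\gamma+\delta)$. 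One small remark: your sentence ``since $\gamma<\delta$, the surface $\mathcal S^+$ lies entirely above the graph of $\partial E_\star$'' is not needed for the estimate (your own argument only uses $|u^\pm-f|\le(1+M_o)(\gamma+\delta)$), and justifying it rigorously would require an extra word; you can simply drop it.
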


\begin{proof} Since~$\partial E_\star\cap K_2$
is~$C^{1,\alpha}$, it separates with power~$(1+\alpha)$
from its tangent hyperplane, with
multiplicative constant~$M_\star$. 
Then, we take~$r:=
(\gamma/M_\star)^{1/(1+\alpha)}$ and we
cover~$\partial E_\star\cap K_2$
with cylinders~$K_r$, centered at points of~$\partial E_\star$
and rotated parallel to the
tangent plane of~$\partial E_\star$.

By construction,
in each of these cylinders, $\partial E_\star$ separates
no more than~$M_\star r^{1+\alpha}=\gamma$ from its
tangent hyperplane, 
and so~$E$ is~$2\gamma$-close
to such hyperplane. Therefore, Lemma~\ref{CC1L}
applies (with~$\gamma$ there replaced by~$2\gamma$).
Consequently, in each of these cylinders,~${\mathcal{S}}^\pm$
is a Lipschitz graph with respect to the normal
direction~$\nu$ of~$\partial E_\star$ (and its Lipschitz norm
is bounded by~$C\sqrt{\gamma/\delta}$ with respect to~$\nu$).

This proves the first part of Corollary~\ref{CorCC}.
It remains to prove~\eqref{s8822211a}.
For this, we fix~$|\bar x'|\le 1/2$
and we set~$P^\pm:=(\bar x',u^\pm(\bar x'))\in{\mathcal{S}}^\pm$.
Then, we take~$Q^\pm \in \partial E$ that realizes the distance,
i.e. $|P^\pm-Q^\pm|=\delta$.
By~\eqref{n992}, we find points~$R^\pm\in \partial E_\star$ such 
that~$|R^\pm-Q^\pm|\le\gamma$. 
Notice that
$$ |(R^\pm)' - (P^\pm)'|\le |(R^\pm)' - (Q^\pm)'|+
|(Q^\pm)' - (P^\pm)|\le \gamma+\delta.$$
Therefore, since~$(P^+)'=(P^-)'=u(\bar x)$, 
$$ |(R^+)' - (R^-)'|\le |(R^+)'-(P^+)'|+|(P^-)'-(R^-)'|\le
2 (\gamma+\delta).$$
So, since~$\partial E_\star$
is a Lipschitz graph,
$$ |R^+_n - R^-_n|\le M_o|(R^+)' - (R^-)'|
\le2 M_o(\gamma+\delta).$$
In particular,
$$ |R^+-R^-|\le 2(1+M_o)(\gamma+\delta)$$
and so
\begin{eqnarray*}
&& |P^+-P^-|
\\ &\le& |P^+-Q^+|+|Q^+-R^+|+|R^+-R^-|
+|R^--Q^-|+|Q^--P^-|
\\ &\le&
2(1+M_o)(\gamma+\delta)+2\gamma+2\delta,
\end{eqnarray*}
which gives~\eqref{s8822211a}.
\end{proof}

\subsection{Detachment from the tangent hyperplane}\label{S2p}


Next result is one of the cornerstones of our
procedure since it manages
to reconstruct a geometry similar to the one
obtained in Lemma~8.1 of~\cite{CS}. In spite of
its technical flavor, it basically states
under which conditions we can say that
a functions separates from
a tangent hyperplane
quadratically in a ring, independently of~$s$
as~$s\rightarrow 1^-$.

\begin{lemma}\label{S LEM} Fix~$\overline C\ge1$.
Let~$\varepsilon$, $R>0$ and~$\bar x'\in \R^{n-1}$.

Let~$u:\R^{n-1}\rightarrow\R$ be a Lipschitz function, with
\begin{equation}\label{gradient}
|\nabla u (x')|\le \overline C
\end{equation}
a.e.~$|x'-\bar x'|\le R$
and let~$\bar x_n:=u(\bar x')$, $\bar x:=(\bar x',\bar x_n)$
and~$E:=\{ x_n<u(x')\}$.

Assume that
\begin{equation}\label{the eq}
(1-s)\int_{B_{R}(\bar x)}\frac{\chi_E(y)-\chi_{\CC E}(y)}{|
\bar x-y|^{n+s}}\,dy
\le \frac{\varepsilon}{R^s}.
\end{equation} 
Suppose that there exists~$\PPara\in C^{1,1}(\R^{n-1})$
such that 
\begin{equation}\label{C1.1}
|\nabla \PPara(x')|+R\,|D^2\PPara(x')|\le\varepsilon
\end{equation}
a.e.~$|x'-\bar x'|\le R$,
\begin{equation}\label{above gamma}
{\mbox{$\PPara(\bar x')=u(\bar x')$
and $\PPara(x')\le u(x')$
in $|x'-\bar x'|\le R$.}}
\end{equation}
Then, there exists a constant~$C\ge1$, only depending
on~$n$ and~$\overline C$,
such that\footnote{The reader may compare~\eqref{star3}
here and~(8.1) in~\cite{CS}.
Notice that such an estimate, roughly speaking,
says that $u$ separates quadratically from
its tangent hyperplane in a ring, up
to a set with small density -- and the the constants
are independent of~$s$.

{F}rom this, a general geometric argument implies
a uniform quadratic detachment
in a whole ball with smaller radius
(see~(8.2) and~(8.3) in~\cite{CS}) and consequently
a linear bound on the image of the subdifferential
of the convex envelope (see~(8.4) in~\cite{CS}),
and this is the necessary ingredient
for the Alexandrov-Bakelman-Pucci theory to work
(see Sections~8, 9 and~10 in~\cite{CS}).
In our framework,~$u$ will be the level set of the distance
from an~$s$-minimal surface: we will add to
it the auxiliary function
of Section~\ref{S COM}
and consider the touching point of the convex envelope.
These points, by construction are touched from below
by a hyperplane, so~$u$ is touched from below
by a smooth function, which motivates the setting
of Lemma~\ref{S LEM}.}
the following result
holds, as long as~$\varepsilon\in(0,1/C)$.
There exists a $(n-1)$-dimensional
ring~$S_r:=\{ |x'-\bar x'|\in (r/C, r)\}$, with~$r\in(0,R]$,
such that, for any~$M>0$ 
we have
\begin{equation}\label{star3}
\frac{ \left| S_r \cap \Big\{
u(x')-\bar x_n-\nabla\PPara(\bar x')\cdot (x'-\bar x')>
\displaystyle\frac{
M\varepsilon r^2}{R}
\Big\}\right|}{\big| S_r\big|} \,\le\,
\frac{C}{M} .\end{equation}
\end{lemma}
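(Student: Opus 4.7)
The plan is to reformulate \eqref{star3} as an averaged $L^1$-type bound and argue by contradiction, using a dyadic summation whose behavior as $s\to 1^-$ is controlled by the key cancellation between the prefactor $(1-s)$ in \eqref{the eq} and the divergence of $\sum_k 2^{-k(1-s)}$. Setting $\ell(x'):=\bar x_n+\nabla\PPara(\bar x')\cdot(x'-\bar x')$ and $v(x'):=u(x')-\ell(x')$, the conclusion \eqref{star3} follows from the averaged bound $\int_{S_r} v^+\,dx'\lesssim \varepsilon r^{n+1}/R$ via Chebyshev's inequality, so it suffices to exhibit one dyadic scale $r=r_k:=R\,2^{-k}$ at which this bound holds.

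To set up the contradiction, one first applies Taylor's theorem to $\PPara$ and uses \eqref{C1.1}--\eqref{above gamma} to get the one-sided quadratic bound $v(x')\ge -\frac{\varepsilon}{2R}|x'-\bar x'|^2$. Then, after translating $\bar x$ to the origin and introducing the tangent half-space $E_0:=\{z_n<\nabla\PPara(\bar x')\cdot z'\}$, one observes that the Euclidean reflection through $\partial E_0$ is an isometry fixing $B_R$ and exchanging $E_0$ with $\CC E_0$, so that the associated nonlocal quantity $\int_{B_R}(\chi_{E_0}-\chi_{\CC E_0})|z|^{-n-s}\,dz$ vanishes. Subtracting this from \eqref{the eq} rewrites the hypothesis as
$$(1-s)\cdot 2(I^+-I^-)\le\varepsilon R^{-s},$$
where $I^\pm$ integrate $|z|^{-n-s}$ over the vertical strips, of respective lengths $v^\pm(\bar x'+z')$, separating the graph of $u$ from its tangent hyperplane. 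The quadratic lower bound on $v^-$ together with $|z|\ge|z'|$ and a polar integration yields $(1-s)I^-\le C_n\varepsilon R^{-s}$, the factor $(1-s)$ absorbing the logarithmic singularity of $\int_0^R\rho^{-s}\,d\rho$.

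Now suppose for contradiction that \eqref{star3} fails for every dyadic scale $r_k$, $k=k_0,\dots,N$, where $k_0$ is chosen so that the slab containing the relevant strip over $S_{r_k}$ sits inside $B_R$ (the Lipschitz hypothesis \eqref{gradient} confines this slab to $|z|\le C_\star r_k$ with $C_\star=C_\star(\overline C)$). Then by Chebyshev one has $\int_{S_{r_k}} v^+\,dx'\ge cC\varepsilon r_k^{n+1}/R$ at every such scale, so each scale contributes at least $c'C\varepsilon r_k^{1-s}/R$ to $I^+$, whence
$$(1-s)I^+\ge c'C\varepsilon R^{-s}\cdot (1-s)\sum_{k=k_0}^N 2^{-k(1-s)}.$$
The crucial uniform estimate is the limit $\lim_{s\to 1^-}(1-s)/(1-2^{-(1-s)})=1/\ln 2$: picking $N$ with $(N-k_0+1)(1-s)\ge 1$ makes the geometric sum at least half its infinite-sum value, so $(1-s)\sum_{k=k_0}^N 2^{-k(1-s)}\ge c_*>0$ uniformly in $s\in[s_o,1)$. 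Combining with the upper bound on $I^-$ and the hypothesis forces $C\le(2C_n+1)/(2c_*c')$, contradicting the initial choice of $C$ strictly larger than this ratio.

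The main obstacle is precisely this $s$-independence: the individual quantities $I^\pm$ diverge like $(1-s)^{-1}$ and so does the dyadic sum $\sum_k r_k^{1-s}$, but the $(1-s)$ prefactor built into \eqref{the eq} cancels one such divergence while the other is absorbed by the elementary identity above, leaving a finite constant that survives the limit $s\to 1^-$. All the technical bookkeeping---tracking $C_\star$, the annular measure constant $c_n$, and pinning down $C$ strictly above their ratio---serves exactly to keep this $1/\ln 2$ quantity alive, which is what distinguishes the present estimate from the $s$-dependent version of \cite{CS} and makes it usable in the compactness argument behind Theorem~\ref{MAIN}.
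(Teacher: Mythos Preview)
Your proof is correct and follows essentially the same strategy as the paper: half-space cancellation for the tangent plane, a dyadic decomposition in scale, a contradiction exploiting the uniform positivity of $(1-s)\sum_k r_k^{1-s}$, and Chebyshev. The only organizational difference is that the paper first locates a good $n$-dimensional annulus $R_m$ where $b_m:=\int_{R_m}(\chi_E-\chi_{\CC E})\,|\bar x-y|^{-n-s}\,dy$ is small (by the same dyadic contradiction) and then extracts the $L^1$ bound on $v^+$ there via the pointwise inequality $\chi_E-\chi_{\CC E}\ge\chi_L-\chi_{\CC L}-2\chi_A$, whereas you split the full integral into $I^+-I^-$ upfront and dispose of the quadratic error $I^-$ globally; this is the same mechanism, repackaged.
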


\begin{proof} We consider the normal vector of
the graph of~$\PPara$
at~$\bar x'$, to wit
$$ \nu:=\frac{(-\nabla\PPara(\bar x'),1)}{
\sqrt{|\nabla\PPara(\bar x')|^2+1}}.$$
Let also
\begin{eqnarray*}
P&:=&\{ x_n<\PPara(x')\},\\
L&:=&\{x_n<\nabla\PPara(\bar x')\cdot (x'-\bar x')+\bar x_n\}
\\ {\mbox{and }}\;
A&:=&\bar x+\left\{ |x\cdot \nu|\le \frac{4\varepsilon}{R}
|\pi_\nu x|^2\right\}.\end{eqnarray*}
We recall that~$\pi_\nu$
is the projection along~$\nu$ (see Section~\ref{notation})
and we
notice that~$A$ is just the translation and the
rotation of the set
$$ \left\{ |x_n|\le \frac{4\varepsilon}{R}
|x'|^2 \right\}$$
and so, for any~$\rho>r>0$,
\begin{equation}\label{ST7}
\int_{B_\rho(\bar x)\setminus B_r(\bar x)}
{\chi_A(y)}\,dy
\le
\int_{|y'|\le \rho}\left[ \int_{|y_n|\le ({4\varepsilon}/{R})
|y'|^2}\,dy_n \right]\,dy'
\le\frac{C\varepsilon\rho^{n+1}}{ R}.
\end{equation}
On the other hand, since $L$ is a halfspace passing
through~$\bar x$,
the following cancellations hold:
\begin{equation}\label{A one}
\int_{B_\rho(\bar x)\setminus B_r(\bar x)}
{\chi_L(y)-\chi_{\CC L}(y)}\,dy =0
\;{\mbox{ and }}\;
\int_{B_\rho(\bar x)\setminus B_r(\bar x)}
\frac{\chi_L(y)-\chi_{\CC L}(y)}{|\bar x-y|^{n+s}}\,dy =0.
\end{equation}
Moreover, by~\eqref{above gamma}, we have that~$P\subseteq E$, thus
\begin{equation}\label{pre2.9a}
\chi_E\ge\chi_P
\;{\mbox{ and so }}\;
\chi_{\CC E}\le
\chi_{\CC P}.
\end{equation}
Also, the quadratic detachment of~$\PPara$ from
its tangent plane given by~\eqref{C1.1} implies that~$
(L\setminus A)\cap B_R\subseteq P\cap B_R$ and $(\CC P)
\cap B_R\subseteq((\CC L)\cup A)\cap B_R$.
Therefore, in~$B_R$,
\begin{equation}\label{pre2.9}
{\mbox{$\chi_L-\chi_A\le \chi_{L\setminus A}
\le\chi_P$ and~$\chi_{\CC P}\le \chi_{(\CC L)\cup A}
\le \chi_{\CC L}+\chi_A$.}}\end{equation}
So, from~\eqref{pre2.9a}
and~\eqref{pre2.9}, we obtain that,
in~$B_R$,
\begin{equation}\label{CHI}\chi_E-\chi_{\CC E}\ge 
\chi_P-\chi_{\CC P}\ge \chi_L-\chi_{\CC L} -2\chi_A.
\end{equation}
Now, for any~$m\in\N$, let
\begin{eqnarray*}
r_m&:=&\frac{R}{\big( (2+\overline C) n\big)^{m}},\\
R_m&:=& B_{r_m}(\bar x)\setminus B_{r_{m+1}}(\bar x)\\
{\mbox{and }}
b_m&:=&\int_{R_m}\frac{\chi_E(y)-\chi_{\CC E}(y)}{|
\bar x-y|^{n+s}}\,dy.\end{eqnarray*}
Here above~$\overline C$ is the one fixed in the statement of
Lemma~\ref{S LEM}.
We claim that there exists~$m\in\N$ such that
\begin{equation}\label{the 1st}
b_m\le \frac{C_o\varepsilon r_m^{1-s}}{R} ,
\end{equation}
for a suitable constant~$C_o\ge1$.
The proof is by contradiction: if not, we have
\begin{equation*}\begin{split}
& \int_{B_R(\bar x)}\frac{\chi_E(y)-\chi_{\CC E}(y)}{|
\bar x-y|^{n+s}}\,dy\\
& \qquad=\sum_{m=0}^{+\infty} b_m
\ge 
\frac{C_o\varepsilon}{R} \sum_{m=0}^{+\infty} r_m^{1-s}=
\frac{C_o\varepsilon}{R^{s}} \sum_{m=0}^{+\infty} \big(
(2+\overline C)n 
\big)^{-(1-s)m}
\\ &\qquad =
\frac{C_o\varepsilon}{R^{s}}\cdot\frac{1}{1-\big(
(2+\overline C) n
\big)^{-(1-s)}}
>\frac{C_o\varepsilon}{R^{s}}\cdot\frac{1}{C(1-s)}
\end{split}\end{equation*}
for some~$C>0$.
This is in contradiction with~\eqref{the eq} if~$C_o$ is large,
and so~\eqref{the 1st} is established. {F}rom now on,~$m$
will be the one given by~\eqref{the 1st}, and~$C_o$ will be simply~$C$
(and, as usual, we will take the freedom of renaming $C$ line after line).

Now, we make use of~\eqref{CHI}, \eqref{A one} and~\eqref{ST7}
to obtain that
\begin{equation*}\begin{split}
& \int_{R_m} \Big( \chi_E(y)-\chi_{\CC E}(y)\Big)
\left( \frac{1}{|\bar x-y|^{n+s}}
-\frac{1}{r_m^{n+s}} \right)
\,dy\\ &\qquad\ge 
\int_{R_m} \Big( \chi_L(y)-\chi_{\CC L}(y)-2\chi_A(y)\Big)
\left( \frac{1}{|\bar x-y|^{n+s}}
-\frac{1}{r_m^{n+s}} \right)
\,dy\\ &\qquad=
\int_{R_m} \Big( -2\chi_A(y)\Big)
\left( \frac{1}{|\bar x-y|^{n+s}}
-\frac{1}{r_m^{n+s}} \right)\,dy\\ &\qquad\ge
-2
\int_{R_m} 
\frac{\chi_A(y)}{|\bar x-y|^{n+s}}\,dy
\\ &\qquad \ge-\frac{C}{r_m^{n+s}}
\int_{R_m} \chi_A(y)\,dy
\\ &\qquad \ge-\frac{C\varepsilon r_m^{1-s}}{ R}.
\end{split}\end{equation*}
Combining this
with~\eqref{the 1st}, we conclude that
\begin{equation*}\begin{split}
\frac{|E\cap R_m|-|(\CC E)\cap R_m|}{r_m^{n+s}}
\,=& \,\int_{R_m}\frac{\chi_E(y)-\chi_{\CC E}(y)}{r_m^{n+s}}\,dy
\\ =& \,b_m-
\int_{R_m} \Big( \chi_E(y)-\chi_{\CC E}(y)\Big)
\left(\frac{1}{|\bar x-y|^{n+s}}
-\frac{1}{r_m^{n+s}} \right)
\\ \le& \frac{C\varepsilon r_m^{1-s}}{R}
\end{split}\end{equation*}
that is
\begin{equation}\label{star1} |E\cap R_m|-|(\CC E)\cap R_m|\le
\frac{C\varepsilon
r_m^{n+1}}{R}.\end{equation}

Now we prove
that
\begin{equation}\label{star2} 
\int_{ \{ r_{m+1}\le |x'-\bar x'|\le r_m/(\overline C\sqrt n) \} } 
u(x')-\bar x_n-\nabla\PPara(\bar x')\cdot (x'-\bar x')\, dx'
\le
\frac{C\varepsilon r_m^{n+1}}{R}.
\end{equation}
To this scope, we observe that
$$ K_{r_m/\sqrt{n} }\subseteq B_{r_m}\subseteq K_{r_m}$$
and~$r_{m+1}< r_m/(\overline C\sqrt{n})$. Hence
\begin{equation}\label{9-11}
S_m:=\big\{ r_{m+1}< |x'-\bar x'|< r_m/\sqrt{n}\big\} \times
\big\{ |x_n-\bar x_n|< r_m/\sqrt{n}\big\} \,\subseteq \,R_m.
\end{equation}
Of course, no confusion should arise between~$S_m$ here and~$S_r$
in the statement of Lemma~\ref{S LEM}.

Let~$\alpha:=\chi_E-\chi_{L}=\chi_{\CC L}-
\chi_{\CC E}$. We recall that
\begin{equation}\label{9-11-bis}
{\mbox{$\alpha+\chi_A\ge 0$ in $R_m$,}}
\end{equation}
due to~\eqref{pre2.9a} and~\eqref{pre2.9}. 

Accordingly,
by~\eqref{ST7}, \eqref{A one},
\eqref{9-11} and~\eqref{9-11-bis},
\begin{equation}\label{9-12}\begin{split}
&|E\cap R_m|-|(\CC E)\cap R_m|
\\ =\,& \int_{R_m} {\chi_E(y)-\chi_{\CC E}(y)} \,dy -0
\\ =\,& \int_{R_m} {\chi_E(y)-\chi_{\CC E}(y)} \,dy
-\int_{R_m} {\chi_{L}(y)-\chi_{\CC L}(y)} \,dy
\\ =\,&
2 \int_{R_m} {\alpha(y)}\,dy\\ =\,&
2 \int_{R_m} {\alpha(y)}+\chi_A(y) \,dy-2\int_{R_m}\chi_A(y)\,dy
\\ \ge\,& 2 \int_{S_m} {\alpha(y)}+\chi_A(y)
\,dy-\frac{C\varepsilon r_m^{n+1}}{R}.
\end{split}\end{equation}
Now, 
we use~\eqref{gradient} and~\eqref{C1.1}
to see that, if~$|y'-\bar x'|<r_m/
(\overline C\sqrt n)$, we have
\begin{equation}\label{9-13}\begin{split}
& |\nabla\PPara(\bar x')\cdot(y'-\bar x')|\le
|y'-\bar x'|<r_m/\sqrt{n} \\ {\mbox{and }}\;&
|u(y')-\bar x_n| =|u(y')-u(\bar x')|\le \overline C
|y'-\bar x'|<r_m/\sqrt{n}.
\end{split}\end{equation}
Hence, fixed~$y'$, with~$|y'-\bar x'|\in \big(r_{m+1}, r_m/
(\overline C\sqrt{n})\big)$
we see that~$\alpha(y',y_n)=1$ 
when~$(y',y_n)$ is trapped between~$E$ and~$\CC L$
(notice that it cannot exit~$S_m$ from either the
top or the bottom, by~\eqref{9-13}), i.e.,
when
$$\bar x_n+\nabla\PPara(\bar x'
)\cdot (\bar x')(y'-\bar x')\le y_n<u(y').$$
So, recalling~\eqref{9-11-bis}
and integrating first in~$dy_n$, we have that
$$ \int_{S_m} {\alpha(y)}+\chi_A(y) \,dy \,\ge\,
\int_{ \big\{|y'-\bar x'|\in (r_{m+1}, r_m/(\overline C\sqrt{n}))
\big\} }\Big(
u(y') - \bar x_n-\nabla\PPara(
\bar x')\cdot(x'-\bar x')\Big)^+\,dy'.$$
This,~\eqref{9-12} and~\eqref{star1} imply~\eqref{star2}.

Then,~\eqref{star3} follows from~\eqref{star2}
and the Chebyshev Inequality, taking~$r:=r_m/(\overline C\sqrt{n})$,
$S_r:=
\{ |x'-\bar x'|\in (r_{m+1}, r_m/(\overline C\sqrt{n}))\}$
and noticing that~$|S_r|
\sim r_m^{n-1}$ (remember that~$S_r\subset\R^{n-1}$).
\end{proof}

\subsection{The mean curvature as a limit equation}

In this section, we show that the integral equation
of~$s$-minimal surfaces converges, in a somewhat
uniform way, to the classical mean curvature equation
as~$s\rightarrow1^-$,
and we remark that the estimates
improve as the surfaces gets flatter
and flatter (see~\cite{Aba} for a more detailed discussion
on nonlocal curvatures).
An estimate of this kind will be
useful in the computation of the
forthcoming Lemma~\ref{barrier}.

\begin{lemma}\label{limit curvature}
Let~$s\in[1/10,1)$.
Let~$\alpha\in(0,1)$.
Let~$F\subset\R^n$,
$x_o\in\partial F$,
and suppose that~$\partial F\cap 
B_1(x_o)$
is a $C^{2,\alpha}$-graph in some direction,
with~$C^{2,\alpha}$-norm bounded by some~$M>0$.

Then, there exists~$C\ge 1$, only depending on~$\alpha$
and~$n$, such that
\begin{equation}\label{LAC1}
\left|{\mathcal{H}}(x_o)-\frac{(n-1)(1-s)}{\varpi}
\int_{B_r}\frac{\chi_F(y)-
\chi_{\CC F}(y)}{|x_o-y|^{n+s}}\,dy\right|\le 
\frac{CM(1-s)}{r},
\end{equation}
where~${\mathcal{H}}$ is the mean curvature (see Section~\ref{notation}) and
\begin{equation}\label{r definition}
r:=\min\left\{\frac{1}n,\frac{1}{2M}\right\}.
\end{equation}
In particular, if~$M\in (0,1]$, 
\begin{equation}\label{LAC3}
\left|{\mathcal{H}}(x_o)-\frac{(n-1)(1-s)}{\varpi}
\int_{B_r}\frac{\chi_F(y)-
\chi_{\CC F}(y)}{|x_o-y|^{n+s}}\,dy\right|\le
{C M(1-s)}.
\end{equation}
\end{lemma}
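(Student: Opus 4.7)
The plan is to realize $\partial F$ locally as a graph, subtract out the tangent halfspace (whose nonlocal curvature contribution vanishes by odd symmetry), and extract $\mathcal{H}(x_o)$ as the leading-order contribution from the thin lens between the graph and its tangent plane, treating everything else as a small remainder.

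After translating $x_o$ to the origin and rotating so that the tangent plane to $\partial F$ at $x_o$ is $\{y_n=0\}$, the $C^{2,\alpha}$ hypothesis lets me write $F\cap B_r=\{y_n<u(y')\}\cap B_r$ with $u(0)=0$, $\nabla u(0)=0$, $|u(y')|\le \tfrac{M}{2}|y'|^2$, and $|u(y')-\tfrac12\langle D^2u(0)y',y'\rangle|\le M|y'|^{2+\alpha}$; with the appropriate orientation convention, $\mathrm{tr}\,D^2u(0)=\mathcal{H}(x_o)$. The choice $r\le 1/(2M)$ from \eqref{r definition} forces $|u(y')|\le r/4$ whenever $|y'|\le r$, so the graph fits inside $B_r$. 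Setting $L=\{y_n<0\}$, the identity
\begin{equation*}
\int_{B_r}\frac{\chi_L(y)-\chi_{\CC L}(y)}{|y|^{n+s}}\,dy=0
\end{equation*}
(by odd symmetry in $y_n$) lets me rewrite the integrand in \eqref{LAC1} as $2(\chi_F-\chi_L)$, which is supported on the lens $F\triangle L$ between the graph and the tangent hyperplane.

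Integrating first in $y_n$ and Taylor-expanding $(|y'|^2+y_n^2)^{-(n+s)/2}$ at $y_n=0$ (valid because $|u(y')|/|y'|\le Mr/2\le 1/4$), the lens contribution reduces at main order to $2\int_{|y'|\le r} u(y')/|y'|^{n+s}\,dy'$. Substituting the quadratic part of $u$ and passing to polar coordinates $y'=\rho\omega$ on $\R^{n-1}$, the angular identity $\int_{S^{n-2}}\omega_i\omega_j\,d\omega=\tfrac{\varpi}{n-1}\delta_{ij}$ turns the quadratic form into a trace, while the radial integral $\int_0^r\rho^{-s}\,d\rho=r^{1-s}/(1-s)$ supplies the $1/(1-s)$ pole that cancels the prefactor $(1-s)$. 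Multiplying by $(n-1)(1-s)/\varpi$, the net outcome is $\mathcal{H}(x_o)\,r^{1-s}$ plus remainders.

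It remains to absorb the difference between this and $\mathcal{H}(x_o)$ into $CM(1-s)/r$. The deficit $\mathcal{H}(x_o)(r^{1-s}-1)$ is bounded via $|r^{1-s}-1|\le(1-s)|\log(1/r)|\le C(1-s)/r$ combined with $|\mathcal{H}(x_o)|\le (n-1)M$; the $C^{2,\alpha}$ Taylor remainder contributes $O((1-s)Mr^{\alpha})$; and the $y_n$-expansion remainder contributes $O((1-s)M^3r^{3-s})$, which is $O(M(1-s))$ since $Mr\le 1$. Each of these is $\le CM(1-s)/r$ because $r\le 1$, and \eqref{LAC3} follows from \eqref{LAC1} with $r=1/n$ when $M\le 1$. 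The main obstacle I anticipate is enforcing uniformity as $s\to 1^-$: the leading computation depends on an exact cancellation between the $(1-s)$ prefactor and the $1/(1-s)$ radial pole, so every error term must be shown to carry a genuine vanishing factor of $(1-s)$ (and not conceal a $1/(1-s)$ singularity from the radial integration), while the standing hypothesis $s\ge 1/10$ keeps the denominators $1+\alpha-s$, $3-s$, etc.\ uniformly bounded away from $0$.
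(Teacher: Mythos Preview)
Your proposal is correct and follows essentially the same approach as the paper: flatten the graph, subtract the tangent halfspace by odd symmetry, integrate the thin lens first in the vertical variable, extract the quadratic part via the spherical identity $\int_{S^{n-2}}\omega_i\omega_j=\tfrac{\varpi}{n-1}\delta_{ij}$, and check that all remainders carry a genuine factor of $(1-s)$. The paper packages the vertical integration through the auxiliary function $G_s(\tau)=\int_0^\tau(1+t^2)^{-(n+s)/2}\,dt$ and its expansion $G_s(\tau)=\tau+O(\tau^3)$, which is exactly your Taylor step in disguise; the only point you leave implicit is the passage between integrating the lens over $B_r$ versus over the cylinder $\{|y'|\le r\}$ (the paper handles the discrepancy $K_r\setminus B_r$ by a separate cancellation), but this correction is of order $(1-s)Mr^{1-s}\le CM(1-s)/r$ and fits into your error budget.
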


\begin{proof} Without loss of generality, up to
a translation and a rotation, which leave
our problem invariant, we may take~$x_o=0$
and the tangent hyperplane of~$\partial F$ at~$0$
to be~$\{x_n=0\}$. In this way, we write~$\partial F$
as the graph~$x_n=g(x')$, for~$|x'|\le 
1/\sqrt{n}$,
with~$\nabla g(0)=0$ and~${\mathcal{H}}(0)=\Delta g(0)$.
Up to a rotation of the horizontal coordinates, we also
suppose that~$D^2 g(0)$ is diagonal, with
eigenvalues~$\lambda_1,\dots,\lambda_{n-1}$.
In this way 
$$ g(y')=\frac{1}{2}\sum_{i=1}^{n-1}\lambda_i y_i^2 +h(y'),$$
and~$|h(y')|\le M|y'|^{2+\alpha}$.
So, for any~$|y'|\le r$,
\begin{equation}\label{in trap}
|g(y')|\le Mr^2\le \frac{r}{2},\end{equation}
thanks to~\eqref{r definition}.
We observe that, by rotational symmetry,
$$ \int_{\{|y'|\le r\}} y_j^2\,|y'|^{-(n+s)}\,dy'=
\int_{\{|y'|\le r\}} y_1^2\,|y'|^{-(n+s)}\,dy'$$
for any~$j=1,\dots,n-1$ and therefore, by summing up in~$j$,
\begin{equation*}\begin{split}
& \frac{\varpi r^{1-s}}{1-s}=
\int_{\{|y'|\le r\}} |y'|^{2-(n+s)}\,dy'\\
&\qquad=(n-1) \int_{\{|y'|\le r\}} y_1^2\,|y'|^{-(n+s)}\,dy'
=(n-1) \int_{\{|y'|\le r\}} y_i^2\,|y'|^{-(n+s)}\,dy'\end{split}
\end{equation*}
for any~$i=1,\dots,n-1$.
Therefore
\begin{equation}\label{cure}
\int_{\{|y'|\le r\}} \sum_{i=1}^{n-1}\lambda_i
y_i^2\,|y'|^{-(n+s)}\,dy'=\frac{\varpi r^{1-s}{\mathcal{H}}(0)}{
(n-1)(1-s)}.
\end{equation}
Let now
$$ G_s(\tau):=\int_0^\tau \frac{dt}{(1+t^2)^{(n+s)/2}}.$$
We observe that~$G_s(0)=0$, $G'_s(0)=1$ and~$|G_s''(\tau)|
=(n+s)(1+\tau^2)^{-(n+s+2)/2} |\tau|\le (n+1)|\tau|$.
Therefore, a Taylor expansion gives
$$ G_s(\tau)=\tau+\widetilde G_s(\tau),$$
with~$|\widetilde G_s(\tau)|\le C|\tau|^3$.
Therefore, if we write
$$ \widetilde g(y'):=
\frac{g(y')}{|y'|}= \frac{1}{2|y'|}\sum_{i=1}^{n-1} \lambda_i y_i^2
+\widetilde h(y')$$
with~$|\widetilde h(y')|=|h(y')|/|y'|\le
M|y'|^{1+\alpha}$, we have that
\begin{equation*}
\begin{split}
G_s(\widetilde g(y'))&=\widetilde g(y')+\widetilde G_s
(\widetilde g(y'))
\\ &=\frac{1}{2|y'|}\sum_{i=1}^{n-1} \lambda_i y_i^2
+\widetilde h(y')
+\widetilde G_s(\widetilde g(y'))
\\ &=\frac{1}{2|y'|}\sum_{i=1}^{n-1} \lambda_i y_i^2
+\ell(y'),
\end{split}\end{equation*}
with
$$ |\ell(y')|\le |\widetilde h(y')|+C|\widetilde g(y')|^3\le 
CM(|y'|^{1+\alpha}+|y'|^3)\le CM|y'|^{1+\alpha}$$
for any~$|y'|\le r$.
As a consequence of this and~\eqref{cure}, 
\begin{equation}\label{babel1}
\int_{\{ |y'|\le r\}} \frac{G_s(\widetilde g(y'))}{|y'|^{n+s-1}}\,dy'
=\frac{\varpi r^{1-s}{\mathcal{H}} (0)}{2(n-1)(1-s)}+\varepsilon_1
\end{equation}
with~$|\varepsilon_1|\le CM r^{1+\alpha-s}/(1+\alpha-s)\le CM$.
Now, since the map~$(0,+\infty)\ni
t\mapsto 1-e^{-t}$ is concave, we have that~$1-e^{-t}
\in[0,t]$, hence
$$ 1-r^{1-s}\in \big[ 0,(1-s)\log r^{-1}\big].$$
Accordingly, we may write~\eqref{babel1} as
\begin{equation}\label{Bab2}
\int_{\{ |y'|\le r\}} \frac{G_s(
\widetilde g(y'))}{|y'|^{n+s-1}}\,dy'
=\frac{\varpi {\mathcal{H}} (0)}{2(n-1)(1-s)}+\varepsilon_2
\end{equation}
with~$|\varepsilon_2|\le CM(1+\log r^{-1})$.

Now, we recall~\eqref{in trap}, we integrate in the
vertical coordinate and we substitute~$t:=y_n/|y'|$
to obtain that
\begin{eqnarray*}
&&\int_{K_r}\frac{\chi_F(y)-\chi_{\CC F}(y)}{|y|^{n+s}}
\,dy
\\ &=&\int_{|y'|\le r} \left[
\int_{-r}^{g(y')}\frac{dy_n}{(|y'|^2+|y_n|^2)^{(n+s)/2}}
-\int_{g(y')}^r\frac{dy_n}{(|y'|^2+|y_n|^2)^{(n+s)/2}}
\right]\,dy'
\\ &=& \int_{|y'|\le r} \frac{1}{|y'|^{n+s}}\left[
\int_{-r}^{g(y')}\frac{dy_n}{(1+(|y_n|/|y'|)^2)^{(n+s)/2}}
-\int_{g(y')}^r\frac{dy_n}{(1+(|y'|/|y_n|)^2)^{(n+s)/2}}
\right]\,dy'
\\ &=&\int_{|y'|\le r} \frac{1}{|y'|^{n+s-1}}\left[
\int_{-r/|y'|}^{\widetilde 
g(y')}\frac{dt}{(1+t^2)^{(n+s)/2}}
-\int^{r/|y'|}_{\widetilde 
g(y')}\frac{dt}{(1+t^2)^{(n+s)/2}}
\right]\,dy'
\\ &=&\int_{|y'|\le r} \frac{1}{|y'|^{n+s-1}}\left[
G_s(\widetilde g(y'))-G_s(-r/|y'|)
-G_s({r/|y'|})+G_s({\widetilde
g(y')}) \right]\,dy'.
\end{eqnarray*}
Therefore, since~$G_s$ is odd,
\begin{equation}\label{78deii3ejjjjejej}
\int_{K_r}\frac{\chi_F(y)-\chi_{\CC F}(y)}{|y|^{n+s}}
\,dy=2\int_{|y'|\le r} \frac{G_s(\widetilde g(y'))}{|y'|^{n+s-1}}
\,dy'=
\frac{\varpi {\mathcal{H}} (0)}{(n-1)(1-s)}+\varepsilon_3
\end{equation}
with~$|\varepsilon_3|\le CM(1+\log r^{-1})$, due to~\eqref{Bab2}.

Now, we point out the following cancellation:
\begin{eqnarray*}
&& \left|\int_{K_r\setminus B_r}
\frac{\chi_F(y)-\chi_{\CC F}(y)}{|y|^{n+s}}
\,dy\right|\le\int_{(K_r\setminus K_{r/\sqrt n}
)\cap \{|y_n|\le M|y'|\}}
\frac{1}{|y|^{n+s}}
\,dy
\\ &&\qquad\le 
CM\int_{r/\sqrt n}^r \rho^{-1-s}\,ds
=\frac{CM(n^{s/2}-1)}{s r^s}\le \frac{CM}{r}.
\end{eqnarray*}
Accordingly, we can write~\eqref{78deii3ejjjjejej} as
$$ \int_{B_r}\frac{\chi_F(y)-\chi_{\CC F}(y)}{|y|^{n+s}}
\,dy=
\frac{\varpi {\mathcal{H}} (0)}{(n-1)(1-s)}+\varepsilon_4
$$
with~$|\varepsilon_4|\le CM(1+\log r^{-1}+r^{-1})\le CMr^{-1}$.
This proves~\eqref{LAC1}.

Then,~\eqref{LAC3} follows from~\eqref{LAC1} and~\eqref{r definition},
by observing that, if~$M\in(0,1]$, we have that~$r=1/n$ so it does not 
depend on~$M$.
\end{proof}

\subsection{Construction of an
auxiliary function}\label{S COM}

The purpose of this section is
to obtain a special function, which is positive
in a large ball, and that
satisfies the correct inequality with respect
to the integral operator of~\eqref{the eq}
in a smaller ball.
This is needed to apply an appropriate variation of the
local Alexandrov-Bakelman-Pucci theory
of~\cite{Cabre, CS}, in order to localize the set
in which the solution we are considering becomes
positive. Indeed, the following function
is the one that replaces the 
auxiliary functions
in Lemma~4.1 of~\cite{Cabre} and
Corollary~9.3 of~\cite{CS} for our framework
(here, some technical complications
also arise since the operator in~\eqref{6gghhssh}
is both nonlocal and nonlinear in its dependence on the sets):

\begin{lemma}\label{barrier} Fix~$R>0$
and constants~$c_1,\dots,c_5>0$. Fix also~$c_0\in (0,c_1)$.
There exists~$C\ge1$ (possibly depending on~$c_0,\dots,c_5>0$
but independent of~$R$)
such that,
if~$1-s$, $\varepsilon\in(0,1/C]$, the following results hold.

There exists~$\Phi\in
C^\infty(\R^{n-1},[-C\varepsilon R,C\varepsilon R])$
satisfying the following conditions:
\begin{equation}\label{901212}
\begin{split}
&{\mbox{$\Phi(x')>\varepsilon R$ if~$|x'|\ge (c_1+c_2) R$, 
$\Phi(x')\le-4\varepsilon R$ 
if~$|x'|\le c_1 R$, and}}\\ &\qquad
\sup_{\R^{n-1}}|\nabla\Phi|+
R\,|D^2\Phi|\le {C\varepsilon}. \end{split}\end{equation}
Also, let~$L$ be an affine function with
\begin{equation}\label{D L}
|\nabla L|\le\frac{1}{C},
\end{equation}
set
\begin{equation}\label{789sd90djwwjjdfff1}{\mbox{
$\tilde\Phi:=L-\Phi$ and~$F:=\{ x_n<\tilde\Phi(x')\}$.}}
\end{equation} Then
\begin{equation}\label{6gghhssh}
(1-s) \int_{B_{c_3 R}(x)}\frac{\chi_{F}(y)-
\chi_{\CC F}(y)}{|x-y|^{n+s}}\,dy\ge\frac{c_4\varepsilon}{R^s}
\end{equation}
for any~$x\in \partial F\cap \{c_0 R < |x'|\le (c_1+c_2+c_5)R\}$.
\end{lemma}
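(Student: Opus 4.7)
The plan is to construct $\Phi$ explicitly as a smooth, radial profile whose classical Laplacian is pointwise positive of order $\varepsilon/R$ in the annulus $c_0R<|x'|\leq(c_1+c_2+c_5)R$, and then to transfer this classical curvature bound to the nonlocal integral via Lemma~\ref{limit curvature}. I would take $\Phi(x'):=\varepsilon R\,\psi(|x'|/R)$ where $\psi\in C^\infty([0,\infty))$ equals the quadratic $\alpha t^2-\beta$ on $[0,T]$ for $T:=c_1+c_2+c_5+1$, then is smoothly extended to a bounded constant tail beyond $T$. Picking $\alpha:=5/\bigl(c_2(2c_1+c_2)\bigr)$ and $\beta:=\alpha c_1^2+4$ yields $\psi(t)\leq -4$ for $t\leq c_1$ and $\psi(t)\geq 1$ for $t\geq c_1+c_2$; together with the obvious uniform bounds on $|\psi|,|\psi'|,|\psi''|$ (depending only on $c_0,\ldots,c_5$), this produces the required boundary behavior and the gradient/Hessian bounds in~\eqref{901212}, with $T$ chosen so that $\psi$ is still quadratic on the whole annulus where \eqref{6gghhssh} has to be verified.

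For the nonlocal inequality I fix $x\in\partial F\cap\{c_0 R<|x'|\leq(c_1+c_2+c_5)R\}$ and rescale by $R$: set $\tilde F:=F/R$, $\tilde x_o:=x/R$ and $\tilde\Phi_R(\tilde x'):=\tilde\Phi(R\tilde x')/R$. The hypotheses $|\nabla L|\leq 1/C$ and $|\nabla\Phi|+R|D^2\Phi|\leq C\varepsilon$ give $|\nabla\tilde\Phi_R|\leq 1/C+C\varepsilon$ and $|D^2\tilde\Phi_R|\leq C\varepsilon$, so the $C^{2,\alpha}$-norm $M$ of $\tilde\Phi_R$ on balls of radius $1$ is $\leq 1$ provided $C$ is large and $\varepsilon$ small. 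Lemma~\ref{limit curvature} then gives
\begin{equation*}
\left|\mathcal{H}_{\tilde F}(\tilde x_o)-\tfrac{(n-1)(1-s)}{\varpi}\int_{B_{1/n}(\tilde x_o)}\tfrac{\chi_{\tilde F}-\chi_{\CC\tilde F}}{|\tilde x_o-y|^{n+s}}\,dy\right|\leq CM(1-s).
\end{equation*}
Because $L$ is affine and $\psi$ is exactly quadratic in a neighborhood of $|\tilde x_o'|$, one computes $-\Delta\tilde\Phi_R(\tilde x_o')=\Delta\Phi_R(\tilde x_o')=2\alpha(n-1)\varepsilon$; combined with the classical graph formula $\mathcal{H}=-\mathrm{div}\bigl(\nabla u/\sqrt{1+|\nabla u|^{2}}\bigr)$ and the smallness of $|\nabla\tilde\Phi_R|$, this gives $\mathcal{H}_{\tilde F}(\tilde x_o)\geq c\varepsilon$ for some $c>0$ depending only on $c_0,\dots,c_5$. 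The scaling identity $\int_{B_{1/n}(\tilde x_o)}\cdots\,dy=R^s\int_{B_{R/n}(x)}\cdots\,dz$ then converts the preceding display into $(1-s)\int_{B_{R/n}(x)}\cdots\,dz\geq\bigl(c\varepsilon-C'(1-s)\bigr)/R^s$, whose right-hand side exceeds $c'\varepsilon/R^s$ once $1-s$ is small enough.

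The main technical wrinkles are twofold. First, the error term $CM(1-s)$ in Lemma~\ref{limit curvature} has to be absorbed into the main contribution $c\varepsilon$, which forces $1-s$ to be small relative to $\varepsilon$; this is the content of the joint smallness hypothesis $1-s,\varepsilon\in(0,1/C]$, with $C$ chosen large enough (depending on $c_0,\dots,c_5$) that the absorption goes through. Second, the natural ball of integration delivered by Lemma~\ref{limit curvature} has radius $R/n$ rather than $c_3 R$: if $c_3\geq 1/n$ one must control the annular contribution in $B_{c_3R}(x)\setminus B_{R/n}(x)$, which is done by a half-space cancellation (the integrand vanishes identically on the tangent half-space at $x$ by symmetry as in~\eqref{A one}, and the deviation of $F$ from that half-space is of order $\varepsilon$); if instead $c_3<1/n$, one re-applies Lemma~\ref{limit curvature} directly with $r:=c_3$, which is legitimate once $M\leq 1/(2c_3)$, a constraint on $C$ harmless since $C$ may depend on $c_0,\dots,c_5$. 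In either case the sign bookkeeping is the key structural point: the subgraph of the locally concave function $\tilde\Phi=L-\Phi$ has positive classical mean curvature, which by Lemma~\ref{limit curvature} forces the desired positive lower bound on the nonlocal curvature integral.
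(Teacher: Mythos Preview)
There is a fatal sign error in the construction. Your profile $\psi(t)=\alpha t^2-\beta$ with $\alpha>0$ is convex, so $\Delta\Phi=2\alpha(n-1)\varepsilon>0$ and hence $\Delta\tilde\Phi=\Delta L-\Delta\Phi=-\Delta\Phi<0$. In the sign convention of Lemma~\ref{limit curvature} (whose proof writes $\mathcal{H}(0)=\Delta g(0)$, and which matches the explicit graph formula $\mathcal{H}=(1+|\nabla u|^2)^{-1/2}\bigl(\Delta u-\dots\bigr)$ used in the paper's own proof of this lemma), this yields $\mathcal{H}_{\partial F}(x)\approx\Delta\tilde\Phi<0$, and Lemma~\ref{limit curvature} then forces the integral in~\eqref{6gghhssh} to be \emph{negative}. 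Your formula $\mathcal{H}=-\mathrm{div}\bigl(\nabla u/\sqrt{1+|\nabla u|^2}\bigr)$ carries the opposite sign, but the geometry is convention-free: the subgraph of a concave function sits \emph{inside} its tangent half-space at every boundary point, so by the half-space cancellation (as in~\eqref{A one}) one gets $\int_{B_r(x)}(\chi_F-\chi_{\CC F})/|x-y|^{n+s}\,dy\le 0$. The point is that~\eqref{901212} forces $\Phi$ to be radially increasing, while~\eqref{6gghhssh} forces $\Phi$ to be \emph{superharmonic} on the annulus; a convex quadratic cannot do both. The paper resolves this by taking $\Phi(x')=\varepsilon\bigl(c_0^q\mu_q/c_1^q-4-c_0^q\mu_q/|x'|^q\bigr)$ for $|x'|>c_0$ with $q>n-3$: this is radially increasing yet satisfies $\Delta\Phi=-\varepsilon q(q+3-n)\mu_q c_0^q|x'|^{-q-2}<0$, the singularity at $0$ is harmless since~\eqref{6gghhssh} is only asserted for $|x'|>c_0 R$, and the free large parameter $\mu_q$ is precisely what allows one to reach the prescribed constant $c_4$.

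There is a second, independent gap in your absorption step. You write the lower bound as $c\varepsilon-C'(1-s)$ and say this ``forces $1-s$ to be small relative to $\varepsilon$'', but the hypothesis is only that $1-s,\varepsilon\in(0,1/C]$ \emph{independently}; nothing prevents $\varepsilon\ll 1-s$, in which case $c\varepsilon-C'(1-s)<0$. The resolution (used in the paper) is that the $M$ entering the error of Lemma~\ref{limit curvature} is governed by the second-order part of the graph after rotating the tangent plane to horizontal; since $L$ is affine this is $\|\Phi\|_{C^{2,\alpha}}\le C\varepsilon$, so the error term is $C\varepsilon(1-s)$ rather than $C(1-s)$, and the factor $\varepsilon$ cancels in the comparison $\varepsilon\mu_q/C-C\varepsilon(1-s)\ge c_4\varepsilon$.
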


\begin{proof} Up to replacing~$\Phi(x')$ with~$R\Phi(x'/R)$,
we may and do consider just the 
case~$R=1$.
Then, the function we will construct is depicted in Figure~2.

\begin{figure}[htbp]
\begin{center}
\resizebox{11.2cm}{!}{\input{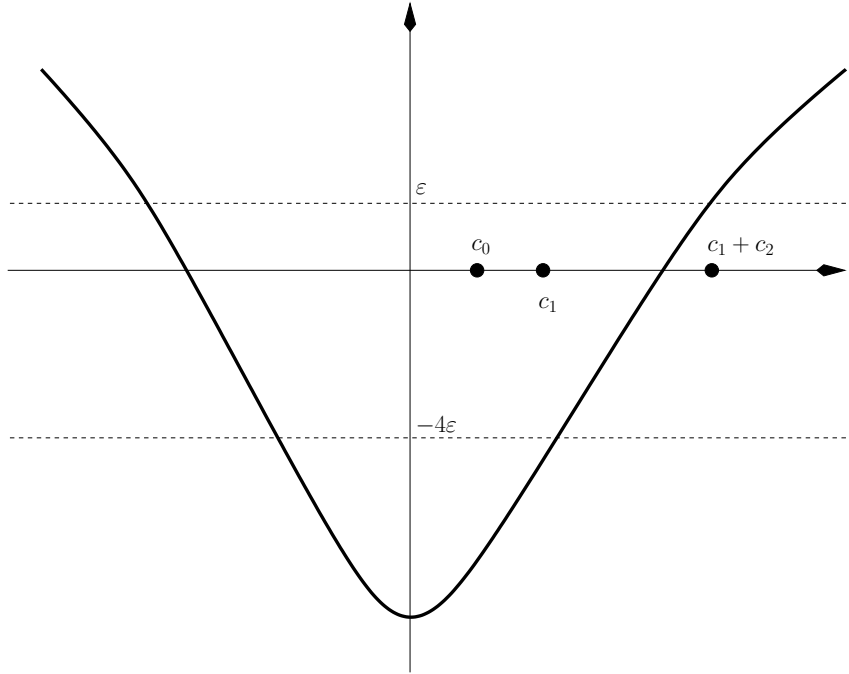}}
{\caption{\it The auxiliary function~$\Phi$ (with~$R=1$).}}
\end{center}
\end{figure}

More explicitly, we 
take~$\Phi$ to be smooth, radial,
radially increasing, satisfying~\eqref{901212}
with~$R=1$, and in fact
$$\|\Phi\|_{C^{2,\alpha}(\R^{n-1})}
\le C(1+\mu_q)\varepsilon,$$ and such 
that
\begin{equation*}
\Phi(x')=\varepsilon\left(\frac{c_0^q\mu_q}{c_1^q}-4-\frac{c_0^q\mu_q}{|x'|^q}\right)
\end{equation*}
if~$|x'|>c_0$. Here, $q>n-3$ is a fixed free parameter and~$\mu_q>0$
will be chosen appropriately large\footnote{At the moment we only need that~$\mu_q$
is so large that
$$ c_0^q \mu_q \left(\frac{1}{c_1^q}-\frac{1}{(c_1+c_2)^q}\right)\ge6.$$
In this way, if~$|x'|\ge c_1+c_2$, then
$$ \Phi(x')\ge\varepsilon\left( \frac{c_0^q\mu_q}{c_1^q}-4-\frac{c_0^q\mu_q}{(c_1+c_2)^q}\right)\ge2\varepsilon$$
that gives~\eqref{901212}.}
at the end of the proof.
We observe that, if~$|x'|>c_0$,
\begin{eqnarray*}
|\partial_i\Phi|&\le& {\varepsilon q\mu_q}c_0^q\,|x'|^{-q-1},\\
|\partial^2_{ij}\Phi|&\le&{\varepsilon q (q+3)\mu_q
}c_0^q\,|x'|^{-q-2}
\\{\mbox{and }}\;
-\Delta\tilde\Phi=\Delta\Phi &=&-{\varepsilon q(q-n+3)\mu_q}\,
c_0^q|x'|^{-q-2}.
\end{eqnarray*}
Accordingly, if~$|x'|>c_0$,
\begin{eqnarray*} && \sqrt{1+|\nabla\tilde\Phi|^2}\in [1,2]\\
{\mbox{and }}&&
\Delta\tilde\Phi-\left|
\frac{(D^2\tilde\Phi\nabla\tilde\Phi)\cdot\nabla
\tilde\Phi}{1+|\nabla\tilde\Phi|^2}
\right|\ge \frac{\varepsilon q(q+3-n)\mu_q}{4}c_0^q\,|x'|^{-q-2}
\end{eqnarray*}
as long as~$\varepsilon$ is small enough, thanks to~\eqref{D L}.
Hence, we estimate the mean curvature of~$\partial F$ at some
point~$x$ with~$|x'|\in (c_0,c_1+c_2+c_5]$ as 
$$ {\mathcal{H}} (x) = \frac1{\sqrt{1+|\nabla\tilde\Phi|^2}}\left(
\Delta\tilde\Phi-\frac{(D^2\tilde\Phi
\nabla\tilde\Phi)\cdot\nabla\tilde\Phi}{1+|\nabla\tilde\Phi|^2}
\right)
\ge \frac{\varepsilon \mu_q}{C} .$$
Therefore, if~$x\in\partial F$, $|x'|\in(c_0,c_1+c_2+c_5]$,
we have that
$$ (1-s) \int_{B_{c_3}(x)}\frac{\chi_F(y)-
\chi_{\CC F}(y)}{|x-y|^{n+s}}\,dy\ge 
\frac{\varepsilon \mu_q}{C^2}
-C(1+\mu_q)\varepsilon (1-s)\ge
\frac{\varepsilon \mu_q}{C^3} $$
thanks to~\eqref{LAC3}
in Lemma~\ref{limit curvature}, as long as~$1-s$ and~$\varepsilon$
are small enough. This and a suitably large choice of~$\mu_q$
give~\eqref{6gghhssh} (namely, we take~$\mu_q /C^3\ge c_4$).
\end{proof}

\subsection{Measure estimates for the oscillation}

We obtain the following measure estimate.
Such result may be
seen as the counterpart, in our framework,
of the measure estimate in Lemma~4.5 of~\cite{Cabre}
and Lemmata~8.6 and~10.1 of~\cite{CS}.

\begin{lemma}\label{SILV 8.6} Fix~$\overline C\ge1$.
Let~$\kappa\in \R$ and~$R>0$. 
Let~$u:\R^{n-1}\rightarrow\R$ be a Lipschitz function, with
\begin{equation}\label{AL}|\nabla u (x')|\le \overline C
\end{equation}
a.e.~$|x'|\le 3R$, and
\begin{equation}\label{USE}
u(x')\ge\kappa {\mbox{ for any }}|x'|\ge R.
\end{equation}
Let~$E:=\{ x_n<u(x')\}$.

Assume that, for any~$x\in \partial E\cap B_{4n}$,
\begin{equation}\label{sd77ef12345d}
(1-s)\int_{B_{R}(x)}\frac{\chi_E(y)-\chi_{\CC E}(y)}{|
x-y|^{n+s}}\,dy
\le \frac{\varepsilon}{R^s}.\end{equation}
Then, if
\begin{equation}\label{901212-bis}
\inf_{Q_{3R}} u\le \kappa+\varepsilon R
\end{equation}
we have that
\begin{equation}\label{step zero}
\Big|\big\{ u-\kappa \le M\varepsilon R
\big\}\cap Q_R \big\}\Big|\ge \mu R^{n-1},
\end{equation}
for appropriate universal constants~$M>1$
and~$\mu\in(0,1)$, as long as~$1-s$ and~$\varepsilon\in (0,1/C]$,
with~$C\ge1$ suitably large.

Here,~$M$, $\mu$ and~$C$ only depend on~$n$
and~$\overline C$.
\end{lemma}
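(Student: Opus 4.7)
By the scaling $x\mapsto Rx$ I reduce to $R=1$. Set $v:=u-\kappa$, so that $v\ge 0$ on $\{|x'|\ge 1\}$ and $\inf_{Q_3}v\le\varepsilon$; the goal becomes $|\{v\le M\varepsilon\}\cap Q_1|\ge\mu$. The strategy is the nonlocal analogue of the ABP-in-measure argument of Lemma~4.5 of~\cite{Cabre} and Lemmata~8.6 and~10.1 of~\cite{CS}: produce a barrier via Lemma~\ref{barrier}, form the convex envelope, and bound its Hessian at contact points via Lemma~\ref{S LEM}.

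I invoke Lemma~\ref{barrier} with the parameters tuned so that $c_0\le 1$, $c_1\ge 3\sqrt{n-1}$ (so that $Q_3\subset\{|x'|\le c_1\}$, where $\Phi\le -4\varepsilon$), and $c_4>1$ (which is at our disposal via the parameter $\mu_q$ in the construction of $\Phi$). Set $w:=v+\Phi$. Hypothesis~\eqref{901212-bis} furnishes $x_\star'\in Q_3$ with $w(x_\star')\le -3\varepsilon$, while \eqref{USE} together with the positivity of $\Phi$ on $\{|x'|\ge c_1+c_2\}$ gives $w\ge\varepsilon>0$ there. Let $\Gamma$ be the convex envelope (in $\R^{n-1}$) of $\min(w,0)$ on a ball $B_\rho$ with $\rho$ just larger than $c_1+c_2$, and set $\mathcal{C}:=\{\Gamma=w\}$. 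Then $\mathcal{C}$ is nonempty, and at any $\bar x'\in\mathcal{C}$ one has $v(\bar x')\le-\Phi(\bar x')\le C\varepsilon$, so $\mathcal{C}\subset\{v\le C\varepsilon\}$.

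At each $\bar x'\in\mathcal{C}$ there is a supporting affine function $L$, with $|\nabla L|\le C\varepsilon$, such that $u$ is touched at $\bar x'$ from below by the $C^{1,1}$ test function $\PPara(x'):=\kappa+L(x')-\Phi(x')$, which satisfies the hypotheses of Lemma~\ref{S LEM} thanks to~\eqref{901212}. I next rule out contact points outside $Q_1$. If $\bar x'\in\mathcal{C}$ lies in the annulus $\{c_0<|x'|\le c_1+c_2+c_5\}$, set $F:=\{x_n<\PPara(x')\}$; locally $F\subset E$. Lemma~\ref{barrier} gives a lower bound of size $c_4\varepsilon$ on the integral of $(\chi_F-\chi_{\CC F})/|\bar x-y|^{n+s}$ over $B_{c_3}(\bar x)$, while \eqref{sd77ef12345d} bounds the corresponding $E$-integral over $B_1(\bar x)$ by $\varepsilon$. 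The local inclusion $F\subset E$ dominates the $F$-integral over $B_{c_3}$ by the $E$-integral there; the remaining far-field on $B_1\setminus B_{c_3}$ is controlled using the Lipschitz bound~\eqref{AL} in the spirit of the cancellation estimates of Lemma~\ref{limit curvature}. The choice $c_4>1$ then yields a contradiction, so $\mathcal{C}\subset\{|x'|\le c_0\}\subset Q_1$.

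Now apply Lemma~\ref{S LEM} at each $\bar x'\in\mathcal{C}$ with test function $\PPara$: \eqref{sd77ef12345d} produces a dyadic ring on which $u$ separates quadratically from its tangent plane, except on a set of small relative density. As noted in the footnote to Lemma~\ref{S LEM}, this ring-detachment upgrades, via a purely geometric argument, to the pointwise Hessian bound $|D^2\Gamma(\bar x')|\le C\varepsilon$ on $\mathcal{C}$. The $(n-1)$-dimensional ABP inequality then gives
$$ c\,\varepsilon^{n-1}\;\le\;|\nabla\Gamma(\mathcal{C})|\;\le\;\int_{\mathcal{C}}\det D^2\Gamma\,dx'\;\le\;(C\varepsilon)^{n-1}\,|\mathcal{C}|, $$
whence $|\mathcal{C}|\ge c>0$; combined with $\mathcal{C}\subset\{v\le C\varepsilon\}\cap Q_1$, this is \eqref{step zero}.

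The principal obstacle is the upgrade from the ring-averaged detachment of Lemma~\ref{S LEM} to a pointwise Hessian bound on $\Gamma$ at each contact point: in the classical theory this is automatic via Pucci's operator hitting the convex envelope, but here it demands a purely geometric argument that is delicate to implement uniformly in $s$ as $s\to 1^-$. A secondary delicacy is the far-field bookkeeping in the localization step, where the $s$-uniform estimates of Lemma~\ref{limit curvature} and the Lipschitz bound~\eqref{AL} must be combined to control the ``tail'' contributions to the integral.
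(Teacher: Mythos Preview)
Your overall architecture matches the paper's: barrier from Lemma~\ref{barrier}, convex envelope of $w^-$, localization of the contact set into $Q_1$ via the barrier inequality, and application of Lemma~\ref{S LEM} at contact points. The localization step is fine (the paper in fact takes $c_3=1$ in Lemma~\ref{barrier}, so no far-field bookkeeping on $B_1\setminus B_{c_3}$ is needed there).

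The genuine gap --- which you yourself flag as the principal obstacle --- is in the final step. Lemma~\ref{S LEM} does \emph{not} yield a pointwise bound $|D^2\Gamma(\bar x')|\le C\varepsilon$; via Lemma~8.4 of~\cite{CS} it gives only the scale-dependent estimate
\[
\Gamma(x')-\Gamma(\bar x')-\nabla\Gamma(\bar x')\cdot(x'-\bar x')\le C\varepsilon\, r_{\bar x'}^2
\qquad\text{for }|x'-\bar x'|<r_{\bar x'}/2,
\]
with $r_{\bar x'}\in(0,1]$ depending on $\bar x'$. The right-hand side is $C\varepsilon\, r_{\bar x'}^2$, not $C\varepsilon\,|x'-\bar x'|^2$, so no Aleksandrov Hessian bound follows and your chain $|\nabla\Gamma(\mathcal C)|\le\int_{\mathcal C}\det D^2\Gamma\le(C\varepsilon)^{n-1}|\mathcal C|$ is unavailable. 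The paper replaces this with a covering argument: from the quadratic bound one extracts $\nabla\Gamma\big(B_{r_{\bar x'}/4}(\bar x')\big)\subset B_{C\varepsilon r_{\bar x'}}(\nabla\Gamma(\bar x'))$, covers the contact set by a finite-overlap family $\{B^{(j)}\}$ of such variable-radius balls, and obtains $|\nabla\Gamma(\mathcal T)|\le (C\varepsilon)^{n-1}\sum_j|B^{(j)}|$. Combined with the slope lower bound, this gives $\sum_j|B^{(j)}|\ge c$ --- a bound on the \emph{covering}, not on $|\mathcal T|$, which could in principle be null. The conclusion then needs one more appeal to~\eqref{star3}: a definite fraction of each ring $S^{(\bar x')}\subset B^{(\bar x')}$ already lies in $\{u\le C\varepsilon\}$, and finite overlap transfers the measure bound to the sublevel set itself. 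Your displayed ABP inequality and the conclusion $|\mathcal C|\ge c$ do not go through without this covering machinery.
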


\begin{proof} Up to translation, we may suppose 
that~$\kappa=0$.
Let~$\Phi$ be as in
Lemma~\ref{barrier} (with~$c_0,\dots,c_5$ to
be conveniently chosen in what follows).
Let~$v:=u+\Phi$ and~$\Gamma:\R^{n-1}\rightarrow\R$
be the convex envelope of~$v^-:=\min\{v,0\}$
in~$B_{6\sqrt n R}$, that is
$$ \Gamma(x):=
\left\{
\begin{matrix}
\displaystyle\sup_\Xi \ell(x) & {\mbox{ if }} |x'|<{6\sqrt n R}, \\
0 & {\mbox{ if }}|x'|\ge{6\sqrt n R}, 
\end{matrix}
\right.$$
where~$\Xi$ above is a short-hand notation
for all the affine functions~$\ell$
such that $\ell(y')\le v^-(y')$ for any
$|y'|<{6\sqrt n R}$ (see
pages 23--27 of~\cite{Cabre}
for the basic properties of the convex envelope).
Let~${\mathcal{T}}$ be the touching set
between~$v$ and~$\Gamma$, i.e.
$$ {\mathcal{T}}:=\{ x'\in\R^{n-1} {\mbox{ s.t. }}
\Gamma(x')=v(x')\}.$$
Let
$$ m_o:=-\inf_{Q_{3R}} v.$$
Notice that~$v\le u-4\varepsilon R$ in~$Q_{3R}$,
due to~\eqref{901212} (for this we choose~$c_1:=3\sqrt{n}/2$
in Lemma~\ref{barrier},
so that~$Q_{3R}\subseteq \{|x'|\le c_1 R\}$; the other constants~$c_0$, $c_2,\dots,c_5$
will be fixed in the sequel).

Therefore, by~\eqref{901212-bis},
$$ \inf_{Q_{3R}} v\le -2\varepsilon R,$$
so~$m_o\ge 2\varepsilon R$.

We recall that
all the hyperplanes with slope bounded by~$
m_o/(CR)$ belong to~$\nabla\Gamma(B_{6\sqrt n R})$
(see page~24 of~\cite{Cabre} and also~(3.9) there),
hence
\begin{equation}\label{Slope}
\varepsilon^{n-1}\le C\left(\frac{m_o}{R}\right)^{n-1}\le
C|\nabla\Gamma({\mathcal{T}})|.
\end{equation}
Now, for any~$\bar x'\in {\mathcal{T}}$,
we let
\begin{eqnarray*}
&& L(x'):=v(\bar x')+
\nabla\Gamma(\bar x')\cdot (x'-\bar x')\\{\mbox{and }}&&
\PPara:=L-\Phi.\end{eqnarray*}
We point out
that~$v>0$ in~$\{|x'|\ge 3\sqrt{n} R\}$, thanks to~\eqref{901212}
and~\eqref{USE} 
(for this, we choose~$c_2:=3\sqrt{n}/2$
in Lemma~\ref{barrier},
so that~$c_1+c_2:=3\sqrt{n}$).

In particular, since~$\Gamma\le0$,
we see that~$\bar x'\in{\mathcal{T}}\subseteq\{|x'|\le 3\sqrt{n}R\}$.

Also, from~\eqref{901212}, we have
\begin{equation}\label{par1}
|D^2\PPara|=|D^2\Phi|\le \frac{C\varepsilon}{R}.
\end{equation}
Moreover, $v$ is above~$\Gamma$ which is above~$L$ in~$B_{{6\sqrt n R}}$,
by convexity,
therefore, for any~$e\in S^{n-1}$
$$ 0\ge \Gamma(\bar x'+Re)\ge L(\bar x'+Re)
=v(\bar x')+R\nabla\Gamma(\bar x')\cdot e
\ge -C\varepsilon R+R\nabla\Gamma(\bar x')\cdot e$$
that is~$\nabla\Gamma(\bar x')\cdot e\le C\varepsilon$.
So, since~$e$ is an arbitrary unit vector, we get that
\begin{equation}\label{D L 2}
|\nabla L|=
|\nabla\Gamma(\bar x')|\le
C\varepsilon,\end{equation} and so, by~\eqref{901212},
\begin{equation}\label{par2}
|D\PPara|\le C\varepsilon.
\end{equation}
Now we observe that
\begin{equation}\label{bar trap}
{\mathcal{T}}\subseteq Q_R.
\end{equation}
The proof is by contradiction: if not,~$u+\Phi\ge L$
in~$\{ |x'|\le 6\sqrt{n}R\}$, with
equality at some~$\bar x'$ with~$\bar x'\not\in Q_R$.
In particular,~$|\bar x'|\ge R/2$.
Then, we can use Lemma~\ref{barrier}, with~$F$
as in~\eqref{789sd90djwwjjdfff1}
(notice that~\eqref{D L} is satisfied here due to~\eqref{D L 2}).
For this, we set~$\bar x:=(\bar x',u(\bar x'))\in\partial F$,
and we 
choose~$c_0:= 1/4$, $c_4:=2$ and~$c_5:= 
100\sqrt{n}$ in Lemma~\ref{barrier}.
In this way
since~$E\cap B_{{6\sqrt n R}}\supseteq F\cap B_{{6\sqrt n R}}$,
we deduce from~\eqref{6gghhssh} that
\begin{eqnarray*}
(1-s) \int_{B_R(\bar x)}\frac{\chi_{E}(y)-
\chi_{\CC E}(y)}{|x-y|^{n+s}}\,dy\ge 
(1-s) \int_{B_R(\bar x)}\frac{\chi_{F}(y)-
\chi_{\CC F}(y)}{|x-y|^{n+s}}\,dy\ge \frac{2\varepsilon}{R^s}
.\end{eqnarray*}
This is in contradiction with~\eqref{sd77ef12345d}
and so it establishes~\eqref{bar trap}.

Also, given~$\bar x'
\in {\mathcal{T}}$, we have that~$\PPara(\bar x')=v(\bar x')
-\Phi(\bar x')=u(\bar x')$ and
$$ \PPara\le\Gamma-\Phi\le v-\Phi=u.$$
This,~\eqref{AL}, \eqref{sd77ef12345d}, \eqref{par1} 
and~\eqref{par2}
say that the hypotheses of Lemma~\ref{S LEM} are fulfilled (up
to scaling~$\varepsilon$ to~$C\varepsilon$).
As a consequence, by~\eqref{star3}, for any~$M$ large enough,
\begin{equation}\label{9099}\frac{ \left| S^{(\bar x')}
\cap \Big\{
u(x')-u(\bar x')-\nabla\PPara(\bar x')\cdot (x'-\bar x')>
\displaystyle\frac{
M\varepsilon r_{\bar x'}^2}{R}
\Big\}\right|}{\big| S^{(\bar x')}\big|} \,\le\,
\frac{C}{M} \end{equation}
for a suitable ring~$S^{(\bar x')}:= \big\{ |x'-\bar x'|\in\big(
{r_{\bar x'}}/C, {r_{\bar x'}}\big)\big\}$ and
a suitable~$ {r_{\bar x'}}
\in (0,R]$.

On the other hand, by~\eqref{901212},
$$-\Phi(x')+\Phi(\bar x')+\nabla\Phi(\bar x')\cdot
(x'-\bar x')\ge -\frac{\varepsilon r_{\bar x'}^2}{R}\ge
-\frac{M\varepsilon r_{\bar x'}^2}{2R}$$
if~$x'\in  S^{(\bar x')}$, as long as~$M$ is big enough.
Consequently, using that~$v$ lies above~$\Gamma$ and
that~$\bar x'\in{\mathcal{T}}$, we have that
\begin{eqnarray*}
&& \Gamma(x')-\Gamma(\bar x')-\nabla\Gamma(\bar x')\cdot(x'-\bar x')
-\frac{M \varepsilon r_{\bar x'}^2}{2R}\\
&\le&\Gamma(x')-\Phi(x')-\Gamma(\bar x')+
\Phi(\bar x')
-\Big(\nabla\Gamma(\bar x')-
\nabla\Phi(\bar x')\Big)\cdot(x'-\bar x')
\\ &\le&
v(x')-\Phi(x')-v(\bar x')+
\Phi(\bar x')
-\Big(\nabla\Gamma(\bar x')-
\nabla\Phi(\bar x')\Big)\cdot(x'-\bar x')
\\ &=&
u(x')-u(\bar x')
-\nabla\PPara(\bar x')\cdot(x'-\bar x').
\end{eqnarray*}
The latter estimate and~\eqref{9099}
imply that
$$ \frac{ \left| S^{(\bar x')}
\cap \Big\{
\Gamma(x')-\Gamma(\bar x')-\nabla\Gamma(\bar x')\cdot (x'-\bar x')>
\displaystyle\frac{
M\varepsilon r_{\bar x'}^2}{2R}
\Big\}\right|}{\big| S^{(\bar x')}\big|} \,\le\,
\frac{C}{M} .$$
So, by taking~$M$ appropriately large and using
Lemma~8.4 of~\cite{CS} we deduce that
\begin{equation}\label{b782221122m}
\Gamma(x')-\Gamma(\bar x')-\nabla\Gamma(\bar x')\cdot (x'-\bar x')
\le\frac{C\varepsilon r_{\bar x'}^2}{R}
\end{equation}
for any~$|x'-\bar x'|< r_{\bar x'}/2$.

In particular, for any~$|x'-\bar x'|< r_{\bar x'}/4$,
we set~$\rho:=r_{\bar x'}/4$,
we plug the point~$x'+\rho e$ inside~\eqref{b782221122m},
we use the convexity of~$\Gamma$ twice
and we obtain
\begin{eqnarray*}
\frac{ C\varepsilon \rho^2}{R}&\ge&
\Gamma(x'+\rho e)-\Gamma(\bar x')-
\nabla\Gamma(\bar x')\cdot (x'+\rho e-\bar x')
\\ &\ge& \Gamma(x')+\rho \nabla\Gamma(x')\cdot e\\
&&\qquad
-\Gamma(\bar x')-
\nabla\Gamma(\bar x')\cdot (x'+\rho e-\bar x')
\\ &\ge& \Gamma(\bar x')+\nabla\Gamma(\bar x')\cdot(x'-\bar x')
+\rho \nabla\Gamma(x')\cdot e\\
&&\qquad -\Gamma(\bar x')-
\nabla\Gamma(\bar x')\cdot (x'+\rho e-\bar x')
\\ &=&\rho \big(\nabla\Gamma(x')-
\nabla\Gamma(\bar x')\big)\cdot e.
\end{eqnarray*}
So, since~$e$ is an arbitrary unit vector, it follows
that
\begin{equation*}
|\nabla\Gamma(x')-\nabla\Gamma(\bar x')|\le 
\frac{C\varepsilon r_{\bar x'}}{R}
\end{equation*}
for any~$|x'-\bar x'|< r_{\bar x'}/4$,
that is: the~$(n-1)$-dimensional
ball of radius~$r_{\bar x'}/4$ centered at~$\bar x'$
(which we now call~$B^{(\bar x')}$)
is sent, via the map~$\nabla\Gamma$,
inside the~$(n-1)$-dimensional
ball of radius~${C\varepsilon r_{\bar x'}}/{R}$ centered at~$
\nabla\Gamma(\bar x')$ (we observe that the
latter is a ball smaller by
a scale factor~$C\varepsilon/R$,
and let us call~$\widetilde B^{(\bar x')}$
such a ball).

Now we cover~${\mathcal{T}}$
with a countable, finite overlapping system of these balls, say~$\big\{
B^{(j)}\big\}_{j\in \N}$.
By the previous observations,
this covering induces a covering of~$\nabla\Gamma({\mathcal{T}})$
made of balls~$\big\{\widetilde B^{(j)}\big\}_{j\in \N}$, with~$|\widetilde B^{(j)}|
\le C(\varepsilon/R)^{n-1}|B^{(j)}|$.
So, we obtain
the measure estimate
\begin{equation}\label{0shhh65www}
|\nabla \Gamma({\mathcal{T}})|\le \sum_{j\in\N}
|\widetilde B^{(j)}|
\le C\left(\frac\varepsilon{R}\right)^{n-1}
\sum_{j\in\N} |B^{(j)}|.
\end{equation}
On the other hand,
we observe that, if~$ |x'-\bar x'|\le{r_{\bar x'}}$, 
then
\begin{eqnarray*}
u(x')&\le& u(x')-\Gamma(x') \\&\le&u(x')
-\Gamma(\bar x')-\nabla\Gamma(\bar
x')\cdot(x'-\bar x')
\\ &=& u(x')
-u(\bar x')-\Phi(\bar x')-\big(\nabla\PPara(\bar
x')
+\nabla\Phi(\bar x')\big)\cdot(x'-\bar x')
\\ &\le& u(x')-u(\bar x')-\Phi(x')-\nabla\PPara(\bar x')
\cdot(x'-\bar x')+\frac{C\varepsilon}{R}|x'-\bar x'|^2
\\ &\le&  u(x')-u(\bar x')-\nabla\PPara(\bar x')
\cdot(x'-\bar x')+C\varepsilon R
\end{eqnarray*}
thanks to the convexity of~$\Gamma$ and~\eqref{par1}.
Therefore
\begin{equation}\label{L.A.79}\begin{split}
& S^{(\bar x')}
\cap \Big\{
u(x')-u(\bar x')-\nabla\PPara(\bar x')\cdot (x'-\bar x')\le
\displaystyle\frac{
M\varepsilon r_{\bar x'}^2}{R}\Big\}\\
\subseteq\;\,&
S^{(\bar x')}
\cap \{u(x')\le C\varepsilon R\}\\
\subseteq\;\,&
B^{(\bar x')}
\cap \{u(x')\le C\varepsilon R\}.
\end{split}\end{equation}
Also, by~\eqref{9099}
\begin{eqnarray*}&&\left| S^{(\bar x')}
\cap \Big\{
u(x')-u(\bar x')-\nabla\PPara(\bar x')\cdot (x'-\bar x')\le
\displaystyle\frac{
M\varepsilon r_{\bar x'}^2}{R}
\Big\}\right|\\ &&\qquad\ge \left(1-
\frac{C}{M}\right)\,| S^{(\bar x')}|\ge
\frac{| S^{(\bar x')}|}2
\ge \frac{| B^{(\bar x')}|}{C}.\end{eqnarray*}
This and~\eqref{L.A.79} give that
$$ | B^{(\bar x')}|\le
C |B^{(\bar x')}
\cap \{u(x')\le C\varepsilon R\}|.$$
Gathering this estimate,
\eqref{Slope} and \eqref{0shhh65www},
and using the finite overlapping property of~$\big\{
B^{(j)}\big\}_{j\in \N}$, we conclude that
\begin{equation}\label{step zero a}
\begin{split}
& \varepsilon^{n-1}\le 
C|\nabla\Gamma({\mathcal{T}})|
\le C\left( \frac\varepsilon{R}\right)^{n-1}
\sum_{j\in\N} |B^{(j)}|
\\ &\qquad \le C\left( \frac\varepsilon{R}\right)^{n-1}
\sum_{j\in\N} 
\Big|B^{(j)}
\cap \{u\le C\varepsilon R\}\Big|\le C\left( 
\frac\varepsilon{R}\right)^{n-1}
\left|
\bigcup_{j\in\N} B^{(j)}
\cap \{u\le C\varepsilon R\}\right|
.\end{split}\end{equation}
Accordingly,~\eqref{step zero}
is a consequence of~\eqref{step zero a}
and~\eqref{bar trap}.
\end{proof}

\subsection{Uniform improvement
of flatness}

The cornerstone of the regularity theory of~\cite{CRS}
is Lemma~6.9 there, to wit a Harnack Inequality, according to which
$s$-minimal surfaces become more and more flat
when we get
closer and closer to any of their points.
However, the estimates in Lemma~6.9
of~\cite{CRS} are all uniform
when~$s$ is bounded away from
both~$0$ and~$1$, but they do degenerate
as~$s\rightarrow1^-$ (see, in particular, the estimate on~$I_1$
on page~1129 of~\cite{CRS}), therefore such result
cannot be applied directly in our framework.

For this scope, we provide the following
result, which is a version of Lemma~6.9
of~\cite{CRS} with uniform estimates as~$s\rightarrow1^-$.
In fact, the 
reader may compare Lemma~\ref{ovidiu 6.9}
here below with Lemma~6.9 in~\cite{CRS}: the only
difference is that the estimates here are
uniform as~$s\rightarrow1^-$.

Our proof is completely different from the one in~\cite{CRS}
and it is based
on the uniformity of the results obtained
in the preceding sections, together with
a Calder{\'o}n--Zygmund iteration, which needs
to distinguish between two scales of the dyadic cubes.

\begin{lemma}\label{ovidiu 6.9}
Fix~$s_o\in(0,1)$ and~$\alpha\in(0,1)$. Then, there
exist~$K\in \N$
and~$d\in(0,1)$ which only depend
on~$n$, $\alpha$ and~$s_o$, for which the following result holds.

Let~$a:=2^{-K\alpha}$. Let~$E$ be a set with~$s$-minimal perimeter
in~$B_{2^{K+1}}$, with~$s\in[1/10,1)$. Assume that
\begin{equation}\label{X51}
\partial E\cap B_1\subseteq \{ |x_n|\le a\}
\end{equation}
and, for any~$i\in \{0,\dots,K\}$,
\begin{equation}\label{X52}
\partial E\cap B_{2^i}\subseteq \{ |x\cdot
\nu_i|\le a2^{i(1+\alpha)}\}
\end{equation}
for some~$\nu_i\in{\rm S}^{n-1}$.
Then 
\begin{equation}\label{X53}\begin{split}
&{\mbox{either }}\,\partial E\cap B_d\subseteq \{ x_n\le 
a(1-d^2)\}\\ &{\mbox{or }}\,
\partial E\cap B_d\subseteq \{ x_n\ge a(-1+d^2)\}.\end{split}
\end{equation}
\end{lemma}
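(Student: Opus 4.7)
The proof goes by contradiction. Suppose that neither alternative in~\eqref{X53} holds: then there exist points $p^{\pm}\in\partial E\cap B_d$ with $p^+_n > a(1-d^2)$ and $p^-_n<-a(1-d^2)$. My goal is to show that such a double detachment forces a quantitative contradiction via a Calder\'on--Zygmund iteration of the measure estimate of Lemma~\ref{SILV 8.6}, with all constants kept uniform as~$s\to 1^-$.

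\textbf{Reduction to a Lipschitz graph.} Using the slab trapping~\eqref{X51} as the hypothesis~\eqref{Gt}, Lemma~\ref{CC1L} gives, at a sup-convolution scale~$\delta$ chosen to satisfy $a\ll \delta\ll 1$, that the level sets $\mathcal{S}^\pm:=\{d_E=\pm\delta\}$ of the signed distance are Lipschitz graphs $x_n=u^\pm(x')$ with small Lipschitz norm and touchable by tangent paraboloids from above, respectively below. At any such touching point the $s$-minimality of $E$ propagates, up to harmless $O(1-s)$ errors, to integral (in)equalities of the form~\eqref{the eq} for $u^\pm$, matching the hypotheses of Lemmata~\ref{S LEM} and~\ref{SILV 8.6}. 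Since $|u^\pm - x_n|\le C\delta$ near a point of $\partial E$, the assumed detachment transfers to $u^+(p^{+\prime})\ge a(1-d^2)-C\delta$ and $u^-(p^{-\prime})\le -a(1-d^2)+C\delta$.

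\textbf{Calder\'on--Zygmund iteration of the measure estimate.} I would then apply Lemma~\ref{SILV 8.6} to $u^-$, with $\kappa$ slightly above $\inf u^-$, to conclude that on every dyadic cube $Q_{R}$ which still sees a point of $u^-$ near $-a$, a definite positive fraction $\mu$ of the cube lies in the ``bottom band'' $\{u^-\le \kappa+M\varepsilon R\}$. Iterating this bound at successive dyadic scales and exploiting that~\eqref{X52} preserves flatness at every intermediate scale $2^i$ with $0\le i\le K$, a standard stopping-time argument yields a pointwise improvement of the slab, namely $u^-\le (-1+c)a$ on most of $B_d$ for a definite $c\in(0,1)$ depending only on $n$, $\alpha$, $s_o$; a symmetric application to $u^+$ gives $u^+\ge (1-c)a$. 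Combined with the quantitative coincidence $u^+-u^-\le C(a+\delta)$ from~\eqref{s8822211a}, this contradicts the detached points $p^\pm$ as soon as $K$ is chosen so that~$a=2^{-K\alpha}$ lies below a threshold depending only on the prior constants.

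\textbf{The main obstacle.} The crux is to run the iteration uniformly in $s\to 1^-$ while handling the scale transition at $R\sim\delta$. Above this threshold the nonlocal integral operator drives the argument, and Lemma~\ref{SILV 8.6}---whose constants are uniform in $s$ thanks to the barrier of Lemma~\ref{barrier} and the ring-detachment estimate of Lemma~\ref{S LEM}---feeds the dyadic chain; below it the $s$-dependent kernel weight degenerates and one cannot continue in purely nonlocal fashion, so the graph $u^\pm$ must instead be compared with a smooth classical minimal surface through~\eqref{LAC3} of Lemma~\ref{limit curvature}, using the successive flatness assumptions~\eqref{X52} to provide the required $C^{1,\alpha}$ approximant at small scales. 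Splitting the dyadic iteration at $R\sim\delta$ and patching the two regimes so that neither the uniform nonlocal constants nor the classical $C^{1,\alpha}$ estimates introduce a loss depending on~$s$ is the technical heart of the argument, and it is precisely this two-scale bookkeeping which distinguishes the present proof from the non-uniform version of Lemma~6.9 in~\cite{CRS}.
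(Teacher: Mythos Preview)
Your outline captures the two-scale Calder\'on--Zygmund structure, but it omits the compactness step that drives the paper's proof, and without it your final contradiction does not close.

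You work with a single $s$-minimal set~$E$ and a fixed sup-convolution scale~$\delta$ satisfying $a\ll\delta\ll 1$. The bound you invoke from~\eqref{s8822211a} then gives $u^+-u^-\le C(a+\delta)\sim C\delta$, which is \emph{large} compared to~$a$. Your iteration, on the other hand, only produces a point where $u^+-u^-\ge c\,a$; since $\delta\gg a$ this is no contradiction, and enlarging~$K$ (i.e.\ shrinking~$a$) does not help because the dominant term~$C\delta$ does not shrink with it. In the paper this is resolved by first noting that for~$s$ bounded away from~$1$ the result is already Lemma~6.9 of~\cite{CRS}, so the contradiction hypothesis furnishes a \emph{sequence} $s_j\to 1^-$ and $s_j$-minimal sets~$E_j$. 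By~\cite{CV} these converge uniformly to a classical minimal surface~$E_\star$; setting $\gamma_j:=\mathrm{dist}(E_j,E_\star)\to 0$ and $\delta_j:=a\,\gamma_j^{1/(1+\alpha)}\to 0$, Corollary~\ref{CorCC} yields $u^+_j-u^-_j\le C\delta_j\to 0$, and this \emph{does} contradict the lower bound $u^+_j-u^-_j\ge a/2$ coming from the iteration.

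The same limiting surface~$E_\star$ is what supplies the ``smooth classical minimal surface'' you appeal to in the small-cube branch. Lemma~\ref{limit curvature} only compares the nonlocal integral with the mean curvature of the surface you already have; it does not manufacture a $C^{1,\alpha}$ reference graph. In the paper, when the cube width~$\ell$ satisfies $a_j/\ell\ge\varepsilon^\star$, one uses that $u^\pm_j$ lies within $\gamma_j+\delta_j$ of~$\partial E_\star$, whose oscillation on that cube is controlled by classical minimal-surface regularity (the flatness hypotheses~\eqref{X52} pass to~$E_\star$ and force it to be a $C^{1,\alpha}$ graph). Without the passage to the limit you have no such reference, and the small-scale step of your iteration is left unjustified.
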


\begin{proof} The proof is not simple, but the naive idea is to 
argue by
contradiction, supposing that there is
a sequence of~$E_j$'s
that oscillate too much. Then one performs the following steps:
\begin{itemize}
\item By~\cite{CV}, one gets a sequence~$s_j\rightarrow 1^-$
for which~$E_j$ approaches a classical minimal surface~$E_\star$;
\item By~\eqref{CorCC},
one shadows~$E_j$ with level sets of distance 
functions~$u^\pm_j$ from above and below, and the graphs
of $u^\pm_j$ are close to~$\partial E_\star$ as~$s_j\rightarrow 1^-$;
\item Since (by contradiction) we assumed~$E_j$ to oscillate
too much, there are points of~$E_j$ (and so of the graphs of~$u^\pm_j$)
that stay very close to the bottom and the top of the cylinder
of height~$a$;
\item Accordingly, from the fact that there is a point
for which~$u^-_j$ is close to the bottom, we deduce
that~$u^-_j$ is close to the bottom in a rather large set:
for this, one 
needs to use
a dyadic cube argument -- when the cubes are reasonably big,
one can repeat Lemma~\ref{SILV 8.6}, and when the cubes get too small
one takes advantage of the regularity theory for the classical
minimal surface~$E_\star$;
\item Analogously, from the fact that there is a point
for which~$u^+_j$ is close to the top, we deduce
that~$u^+_j$ is close to the top in a rather large set;
\item In particular, we find a point for which~$u^+_j$ is
close to the top and~$u^-_j$ close to the bottom, that is~$u^+_j
-u^-_j$ is of the order of~$a$;
\item This is in contradiction with~\eqref{s8822211a}
and so it completes the proof.
\end{itemize}
We remark that, in these arguments,
there are two
uncorrelated scales involved.
One is the flatness of order one
(which, in the course of the proof,
will be dominated by a configuration of cylinders whose ratio
between the height and the base is
some~$\varepsilon^\star$); the other is 
the one
induced by
the criticality ratio for the minimal surfaces flatness condition
(which is some universal~$\varepsilon_o$).
Of course, both these configurations
are somewhat induced by the trapping of the surface
in a strip of small size~$a$.
The interplay between these two scales
is what allows us to choose 
the critical $s$ in an independent way,
and so to decouple the ratio of the scales involved.
Finally , this implies also that as
the flatness~$\varepsilon_\flat$ of~\eqref{main trap}
improves (while
the classical minimal surfaces flatness~$\varepsilon_o$
is a fixed constant), we can apply
the decrease of oscillation more and more times, so that in the vertical
blow up limit we get a H\"older graph, that is
harmonic in viscosity sense (see~\cite{CRS}).

Below is the full detail discussion.
The proof is by contradiction.
If the claim were false, 
since the estimates of Lemma~6.9 of~\cite{CRS} are
uniform when~$s\ge1/10$ is bounded away from~$1$,
it follows that
there exist
\begin{equation}\label{XoX5}
s_j\rightarrow 1^-,\end{equation}
and a sequence~$E_j$ of~$s_j$-minimal surfaces
in~$B_{2^{K+1}}$ such that
\begin{equation}\label{trap 0}
\partial E_j\cap B_1\subseteq \{ |x_n|\le a\}
\end{equation}
and, for any~$i\in \{0,\dots,K\}$,
\begin{equation}\label{trap i}
\partial E_j\cap B_{2^i}\subseteq \{
|x\cdot \nu_i|\le a2^{i(1+\alpha)}\}.
\end{equation}
for suitable~$\nu_i\in{\rm S}^{n-1}$, but
\begin{equation}\label{not trap}
\partial E_j\cap B_d\cap \{ x_n\ge a(1-d^2)\}\ne\varnothing
{\mbox{ and }}\;
\partial E_j\cap B_d\subseteq \{ x_n\le a(-1+d^2)\}
\ne\varnothing.\end{equation}
By~\eqref{XoX5} and
Theorem~7 in~\cite{CV}, we have that~$\chi_{E_j}$
converges in~$L^1(B_{(9/7) 2^K})$ to 
some~$E_\star$ (possibly up to subsequence).
Therefore (see the Remark after Corollary~17
in~\cite{CV}) $E_j$ approaches~$E_\star$ uniformly
in~$B_{(8/7) 2^K}$ and then, by Theorem~6 in~\cite{CV},
we have that~$E_\star$ is a classical minimal surface
in~$B_{2^K}$.

We will define~$\gamma_j$ to be the distance between~$E_j$
and~$E_\star$ in~$B_{2^K}$: by construction
\begin{equation}\label{dw110dfu2bxq0}
\lim_{j\rightarrow+\infty}\gamma_j=0.
\end{equation}
Let also
$$ \delta_j:=
a\gamma_j^{1/(1+\alpha)},$$
and notice that
\begin{equation}\label{dw110dfu2bxq}
\lim_{j\rightarrow+\infty}\delta_j=0.
\end{equation}
Now, we observe that~$K\alpha>4(1+\alpha)$ if~$K$
is large enough, and so we can
take~$K'\in \N$ such that
\begin{equation}\label{sd9790e3c-33yyy}
\frac{K\alpha}{2(1+\alpha)}-1<K'\le
\frac{K\alpha}{2(1+\alpha)}.\end{equation}
Now, we denote by~$\varepsilon_o$ the 
flattening constants of the classical minimal surfaces
(see, e.g.,~\cite{CC} and references therein) according to which
if a minimal surface is trapped in a
cylinder whose ratio between the height and the base
is below~$\varepsilon_o$, then the minimal surface
is a~$C^{1,\alpha}$-graph in half the cylinder.
By~\eqref{trap i}, \eqref{sd9790e3c-33yyy}
and the uniform convergence 
of~$E_j$, we see that, for large~$K$
(possibly in dependence of~$\varepsilon_o$),
\begin{eqnarray*}
&& \partial E_\star \cap B_{2^{K'}}\subseteq
\{ |x\cdot \nu_{K'}|\le 2^{-K\alpha} 2^{K'(1+\alpha)}\}
\\ &&\qquad\subseteq\{ |x\cdot \nu_{K'}|\le 2^{-K\alpha/2} 
\}\subseteq \{ |x\cdot \nu_{K'}|\le\varepsilon_o\},\end{eqnarray*}
and so
\begin{equation}\label{8d9e93333kk}
{\mbox{$\partial E_\star \cap B_{2^{K'-1}}$ is
a~$C^{1,\alpha}$-graph.}}
\end{equation}
Now, we use Corollary~\ref{CorCC}
with~$\gamma:=\gamma_j$ 
and~$\delta:=\delta_j$:
for this,
we define
\begin{equation}\label{dw110dfu2bxq.2}
{\mathcal{S}}^\pm_j:=\{ x\in \R^n {\mbox{ s.t. }} d_{E_j}(x)=
\pm \delta_j\}\end{equation}
and we deduce from~\eqref{8d9e93333kk} and
Corollary~\ref{CorCC} that~${\mathcal{S}}^\pm_j \cap B_{2^{K'-2}}$ is
\begin{equation}\label{e L}
\begin{split}
&{\mbox{ the graph of a uniformly Lipschitz function, say~$u^\pm_j$.}}
\end{split}\end{equation}
Also, from~\eqref{s8822211a}, \eqref{dw110dfu2bxq0}
and~\eqref{dw110dfu2bxq}, we have that
\begin{equation}\label{so long}
u^+_j(x')-u^-_j(x')\le C\delta_j
\end{equation}
for any~$|x'|\le 1$, as long as~$j$ is large enough.

Now we will concentrate on~$u^-_j$ (the case of~$u^+_j$
being specular): we
set~$E^-_j:= \{x_n<u^-(x')\}$, so that~$\partial E^-_j
= {\mathcal{S}}^-_j$. {F}rom~\eqref{not trap}
and the fact that~${\mathcal{S}}^-_j$ lies
below~$E_j$, we obtain that there exists~$\zeta'\in\R^{n-1}$
with
\begin{equation}\label{ds73jjjjj11}
|\zeta'|\le d
\end{equation}
and
\begin{equation}\label{pr915}
u^-_j(\zeta')\le a(-1+d^2).\end{equation}
As usual in these types of proofs, the convenient~$d$ in our argument will be chosen later on,
in dependence of the constants of the previous lemmata (see~\eqref{ne.2}
below).

Now, we use the following notation:
given any~$x\in\G^-_j$,
let~$y(x)\in \partial E_j$ such that~$|y(x)-x|=\delta_j$,
and let~$\nu(x):=y(x)-x$. Then
\begin{equation}\label{incl}
E^-_j +\nu(x)\subseteq \overline{E}.
\end{equation}
Indeed, if~$p\in E^-_j +\nu(x)$, we have that~$p-\nu(x)\in E^-_j$
and so~$\overline{B_{\delta_j}(p-\nu(x))}\subseteq \overline{E}_j$.
Then, since~$|\nu(x)|=\delta_j$, we have~$p\in
\overline{B_{\delta_j}(p-\nu(x))}\subseteq \overline{E}_j$, 
proving~\eqref{incl}.

Moreover~$\partial E$ has zero Lebesgue measure
(see, e.g., Corollary~4.4(i) of~\cite{CRS}), thus we
infer from~\eqref{incl} that, if~$x_o\in\partial E^- _j$,
\begin{equation}\label{Incl}
\chi_{E^- _j+\nu(x_o)}\le\chi_{E} \qquad{\mbox{ and }}\qquad
\chi_{\CC(E^-_j +\nu(x_o))}\ge\chi_{\CC E}.
\end{equation}
Therefore, using~\eqref{Incl}, the Euler-Lagrange equation
satisfied by~$E$ (see Theorem~5.1 of~\cite{CRS})
and the change of variable~$z:=x+\nu(x_o)$, we obtain
\begin{equation}\label{E.L.}
\begin{split}
&\int_{\R^n} 
\frac{\chi_{E^-_j}(x)-\chi_{\CC(E^-_j)}(x)}{|x-x_o|^{n+s_j}}\,dx
=
\int_{\R^n} \frac{\chi_{E^-_j
+\nu(x_o)}(z)-\chi_{\CC(E^-_j +\nu(x_o))}(z)}{|z
-y(x_o)|^{n+s_j}}\,dz
\\ &\qquad
\le \int_{\R^n} \frac{\chi_{E}(z)-\chi_{\CC E}
(z)}{|z-y(x_o)|^{n+s_j}}\,dz\le0
\end{split}\end{equation}
for any~$x_o\in\partial E^-_j\cap B_C$.
On the other hand, by~\eqref{trap i}, we have that~$|x_o\cdot\nu_i|\le
C a 2^{i(1+\alpha)}$, and so
\begin{eqnarray*}
&& \partial E_j\cap B_{2^i}(x_o)\subseteq
\partial E_j\cap B_{2^{i+C}}\\
&&\qquad\subseteq
\{|x\cdot\nu_i|\le C
a2^{i(1+\alpha)} \}
\subseteq
\{|(x-x_o)\cdot\nu_i|\le C
a2^{i(1+\alpha)} \}
\end{eqnarray*}
for any~$1\le i\le K-C$.
Therefore, for $j$ large,
$$ \partial E^-_j\cap B_{2^i}(x_o)\subseteq
\{|(x-x_o)\cdot\nu_i|\le
Ca2^{i(1+\alpha)} \}$$
for any~$1\le i\le K-C$.
As a consequence,
we obtain the
following cancellation:
\begin{equation}\label{8.20p}\begin{split}
& \left|\int_{\CC B_1(x_o)}
\frac{\chi_{E^-_j}(x)-\chi_{\CC(E^-_j)}(x)}{|x-x_o|^{n+s_j}}\,dx\right|
\\ &\le
\sum_{i=1}^{K-C}
\left|\int_{B_{2^i}(x_o)\setminus B_{2^{i-1}}(x_o)}
\frac{\chi_{E^-_j}(x)-\chi_{\CC(E^-_j)}(x)}{|x-x_o|^{n+s_j}}\,dx\right|
+
\left|\int_{\CC B_{2^{K-C}}(x_o)}
\frac{\chi_{E^-_j}(x)-\chi_{\CC(E^-_j)}(x)}{|x-x_o|^{n+s_j}}\,dx\right|
\\ &\le
C\left[ \sum_{i=1}^{K-C} 
\limits\int_{ {B_{2^i}(x_o)\setminus B_{2^{i-1}}(x_o)}\atop{
\{ |(x-x_o)\cdot\nu_i| \le Ca2^{i(1+\alpha)}\}
}}
\frac{1}{|x-x_o|^{n+s_j}}\,dx
+
\int_{\CC B_{2^{K-C}}(x_o)}
\frac{1}{|x-x_o|^{n+s_j}}\,dx\right]
\\ &\le C\left[ \sum_{i=1}^{K-C}
\int_{2^{i-1}}^{2^i} \frac{a2^{i(1+\alpha)} \rho^{n-2}}{\rho^{n+s_j}}
\,d\rho
+
\int_{2^{K-C}}^{+\infty}
\frac{\rho^{n-1}}{\rho^{n+s_j}}\,d\rho\right]
\\ &\le Ca
\end{split}\end{equation}
provided that~$j$ is big enough (in particular,~$s_j$
is larger than~$\alpha$).

Therefore, by~\eqref{E.L.}
and~\eqref{8.20p},
for any~$x_o\in \partial E^-_j\cap B_C$,
\begin{equation}\label{ne.1} \int_{B_1(x_o)}
\frac{\chi_{E^-_j}(x)-\chi_{\CC(E^-_j)}(x)}{|x-x_o|^{n+s_j}}\,dx
\le Ca .\end{equation}
With this, we 
are in position to
obtain a finer bound in measure,
often referred to with the name of~``$L^\beta$-estimate''
(see, e.g.,
Lemma~4.6 of~\cite{Cabre}
and Lemma~9.2 of~\cite{CS}
for the corresponding results for fully nonlinear
or fractional operators, the proof of which is based
on related, but quite different, techniques).
Such estimate will be based on a
Calder{\'o}n--Zygmund type dyadic cube decomposition.
According to the different scales involved,
we use either
a repeated version of Lemma~\ref{SILV 8.6} or
the vicinity of the classical minimal surface~$E_\star$
to deduce the necessary rigidity features.

Here are the details of such $L^\beta$-estimate.
We take~$\mu\in(0,1)$ and~$M\in(1,+\infty)$
as in Lemma~\ref{SILV 8.6}, and we fix a large integer~$k_o$
such that
\begin{equation}\label{k o}
(1-\mu)^{k_o}\le\frac14.
\end{equation}
Then, we choose
\begin{equation} \label{ne.2}
d:=\frac{1}{2M^{k_o}} \in(0,1),\end{equation}
we set~$a_j:=a+\delta_j+\gamma_j$,
and we claim that, for any $k\in\N$, with~$1\le k\le k_o$, we have that
\begin{equation}\label{L beta}
\left| \Big\{ u^-_j +a_j\ge
\frac{ a_j M^{k-k_o}}{2} \Big\} \cap Q_1\right|\le 
(1-\mu)^k
\end{equation}
as long as~$j$ is large enough.

Indeed, when~$k=1$,~\eqref{L beta} is a consequence
of~\eqref{step zero}, by applying
Lemma~\ref{SILV 8.6} here with~$\varepsilon:=d a_j$, $\kappa:=-a_j$
and~$R:=1$ -- for this recall~\eqref{pr915}, \eqref{ne.1}
and~\eqref{ne.2}
in order to check~\eqref{sd77ef12345d}
and~\eqref{901212-bis}, and consider
the complement set in~\eqref{step zero}:
such configuration is sketched in Figure~3.

\begin{figure}[htbp]
\begin{center}
\resizebox{11.2cm}{!}{\input{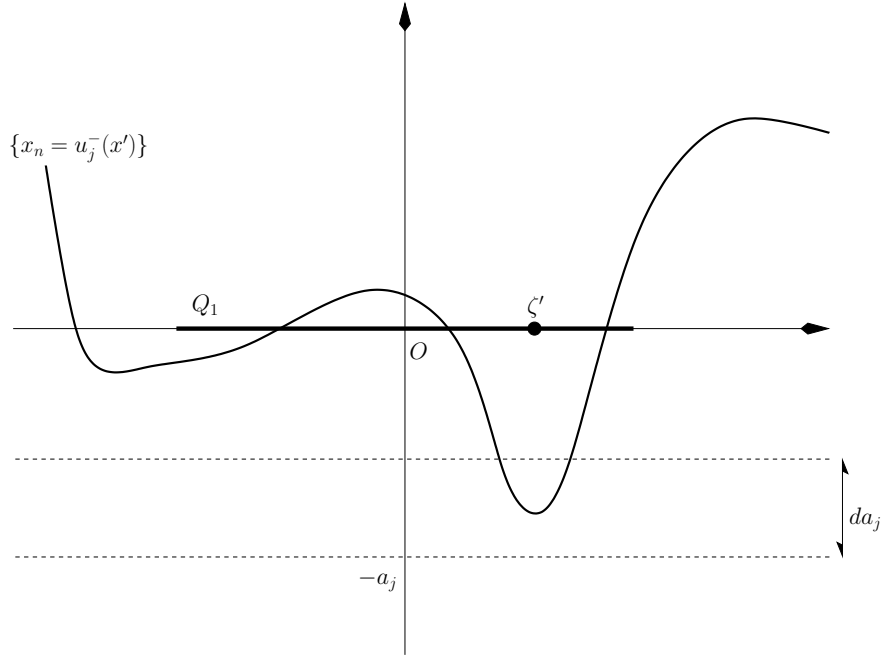}}
{\caption{\it Proving~\eqref{L beta} when $k=1$.}}
\end{center}
\end{figure}

Then, we proceed by induction, by supposing
that~\eqref{L beta} holds for~$k-1$, and we prove
it for~$k\le k_o$.
For simplicity, we just
perform the step from~$k=1$ to~$k=2$ (the others are
analogous).
For this, we define
$$ A:= \Big\{ u^-_j +a_j>
\frac{ a_j M^{2-k_o}}{2} \Big\} \cap  Q_1
\;{\mbox{ and }}\;
B:= \Big\{ u^-_j +a_j>
\frac{ a_j M^{1-k_o}}{2} \Big\} \cap Q_1 .$$
Notice that
\begin{equation}
A\subseteq B\subseteq Q_1
\end{equation} and
\begin{equation}
|A|\le \left|
\Big\{ u^-_j +a_j>
\frac{ a_j M^{1-k_o}}{2} \Big\} \cap Q_1\right|
\le 1-\mu,
\end{equation}
since we know that~\eqref{L beta} holds when~$k=1$.

Now we take a dyadic cube decomposition
of~$Q_1$, with the notation that
if~$Q$ is one of the cubes of the family, its predecessor
is denoted by~$\tilde Q$.
We claim that
\begin{equation}\label{post}
{\mbox{if $|A\cap Q|>(1-\mu) |Q|$ then
$\tilde Q\subseteq B$.
}}
\end{equation}
Notice that if~\eqref{post} holds, then, by Lemma~4.2
of~\cite{Cabre} (applied here with~$\delta:=1-\mu$)
and the inductive 
assumption (that is, in this case,~\eqref{L beta}
with~$k=1$), we have that
\begin{equation*}\begin{split}
& \left|\Big\{ u^-_j +a_j>
\frac{ a_j M^{2-k_o}}{2} \Big\} \cap Q_1\right|=|A|
\\ &\qquad\le(1-\mu) |B|=(1-\mu)
\left|  \Big\{ u^-_j +a_j>
\frac{ a_j M^{1-k_o}}{2} \Big\} \cap Q_1\right|
\le (1-\mu)^2.\end{split}
\end{equation*}
This would complete the induction necessary for
the proof of~\eqref{L beta}, hence we focus on the proof 
of~\eqref{post}.

For the proof of~\eqref{post}, we argue by
contradiction, by supposing that 
\begin{equation}\label{DQ}
|A\cap Q|>(1-\mu)|Q|
\end{equation}
but there exists~$\xi'\in\tilde Q\setminus B$, i.e.
\begin{equation}\label{xi prime}
u^-_j(\xi') +a_j\le
\frac{ a_j M^{1-k_o}}{2}.
\end{equation}
We denote by~$\ell$
the width of~$Q$
(which is, say, centered at some~$x_\star'\in\R^{n-1}$).
We need to distinguish two cases,
according to the scale of the cube~$Q$, namely,
we distinguish whether or 
not~$a_j/\ell \le 
\varepsilon^\star$, using either
Lemma~\ref{SILV 8.6} or the minimal surface rigidity
(here~$\varepsilon^\star$ is a small quantity,
say the minimum between the threshold for the classical
minimal surface regularity~$\varepsilon_o$, as
introduced after~\eqref{sd9790e3c-33yyy},
and the small constants given
by Lemma~\ref{SILV 8.6}: a precise requirement about this will
be taken after~\eqref{EL2}).

If
\begin{equation}\label{less}
a_j/\ell\le \varepsilon^\star,\end{equation} we use
Lemma~\ref{SILV 8.6}.
For this scope, given~$x_o\in\partial E^-_j\cap B_{C}$, we notice that
\begin{eqnarray*}
&& \left| \int_{B_1\setminus B_\ell(x_o)}
\frac{\chi_{E^-_j} (x)-\chi_{\CC (E^-_j)}(x) }{|x-x_o|^{n+s_j}}\,dx
\right|=
 \left| \int_{(B_1\setminus B_\ell(x_o))\cap \{|x_n|\le Ca_j\}}
\frac{\chi_{E^-_j} (x)-\chi_{\CC (E^-_j)}(x) }{|x-x_o|^{n+s_j}}\,dx
\right|\\
&&\qquad \le C \int_{(\CC B_\ell(x_o))\cap \{|x_n|\le Ca_j\}}
\frac{1}{|x'-x_o'|^{n+s_j}}\,dx \le C a_j \int_\ell^{+\infty}
\frac{\rho^{n-2}}{\rho^{n+s}}\,d\rho\le \frac{C a_j}{\ell^{1+s}}.
\end{eqnarray*}
As a consequence, recalling~\eqref{ne.1},
\begin{equation}\label{EL2}
(1-s_j) \int_{B_\ell(x_o)}
\frac{\chi_{E^-_j} (x)-\chi_{\CC (E^-_j)}(x) }{|x-x_o|^{n+s_j}}\,dx
\le \frac{C (1-s_j) a_j}{\ell^{1+s}}.
\end{equation}
With this, we are in position to apply
Lemma~\ref{SILV 8.6} with~$\kappa:=-a_j$,
$R:=\ell$ and~$\varepsilon:=a_j 
M^{1-k_o}/
(2\ell)$ -- notice indeed that~\eqref{sd77ef12345d}
follows from~\eqref{EL2}, \eqref{901212-bis}
follows from~\eqref{xi prime} and, recalling~\eqref{less},
we see that~$\varepsilon\le
\varepsilon^\star M^{1-k_o}/2$ which is small if so 
is~$\varepsilon^\star$: this configuration is
represented in Figure~4.

\begin{figure}[htbp]
\begin{center}
\resizebox{11.2cm}{!}{\input{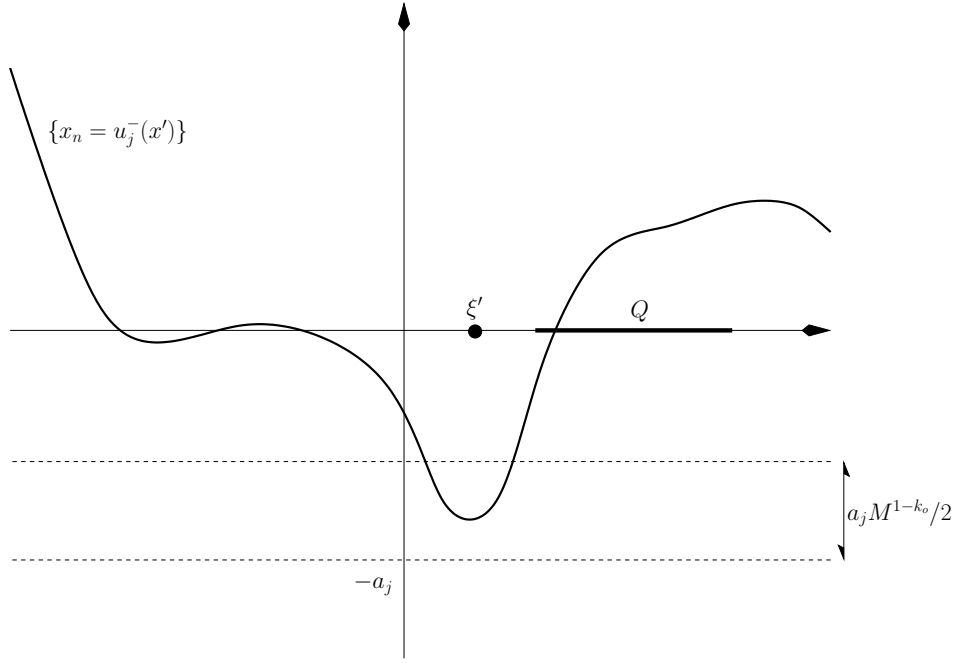}}
{\caption{\it Proving the inductive step of~\eqref{L beta}
when~$a_j/\ell\le \varepsilon^\star$.}}
\end{center}  
\end{figure} 

So, we obtain from~\eqref{step zero}
that
\begin{eqnarray*}
&& |A\cap Q|=\Big|\big\{ u^-_j +a_j > \frac{a_j M^{2-k_o}}{2}
\big\}\cap Q\Big|
\\ &&\qquad=
\Big|\big\{ u^-_j -\kappa> 
M\varepsilon R
\big\}\cap Q\Big|\le (1-\mu) |Q|,\end{eqnarray*}
which is in contradiction with~\eqref{DQ}.
This proves~\eqref{post} if~\eqref{less} holds true.

Now we deal with the case in which~$a_j/\ell \ge
\varepsilon^\star$, and we fix~$\theta\in(0,1)$
to be chosen suitably small in the sequel.
We set~$p:=a_j/(\theta^2 \varepsilon^\star)$.
Notice that, for small~$\theta$, we
have that~$p>10 a_j/\varepsilon^\star \ge10\ell$.
Also, the ratio between~$a_j$ and~$p$ is below~$\theta^2 
\varepsilon^\star$, 
hence a minimal surface 
that is trapped inside~$\{|x'|\le p\}\times\{|x_n|\le 8 a_j\}$
is the graph of a function~$\omega$, with~$|\nabla\omega|\le 
\theta^{3/2} \varepsilon^\star$. 
Accordingly,
\begin{equation}\label{omega os}\begin{split}&{\mbox{
the oscillation of~$\omega$ in~$\{|x'_i|\le 6\ell\}$}}\\
&\qquad{\mbox{
is bounded by~$\theta\varepsilon^\star\ell \le \theta a_j$.}}
\end{split}\end{equation}
Keeping this in mind,
we take~$j$ so large that~$\gamma_j$, i.e. the distance 
between~$E_j$
and~$E_\star$ is less than~$\theta^2\varepsilon^\star p/2$ 
(recall~\eqref{dw110dfu2bxq0}).
Also, for large~$j$, we have that the graph of~$u^-_j$ is
at distance~$\delta_j$ less than~$\theta^3\varepsilon^\star p/2$ 
from~$E_j$,
and so less than~$\theta^3\varepsilon^\star p$ from~$E_\star$
(recall~\eqref{dw110dfu2bxq}
and~\eqref{dw110dfu2bxq.2}).

Accordingly, $\partial E^\star\cap \{|x'_i|\le 6\ell\}$
is trapped in a slab of width~$4a_j+2\theta^3\varepsilon^\star p<8a_j$,
and, by~\eqref{xi prime}, its boundary contains a point
with vertical entry below~$
(a_j M^{1-k_o}/{2})+\theta^3\varepsilon^\star p$.
Then, by~\eqref{omega os}, the whole of~$\partial E^\star\cap \{|x'_i|\le 
4\ell\}$ has vertical entry below
$$ -a_j+(a_j M^{1-k_o}/{2})+\theta^3\varepsilon^\star p+\theta a_j.$$
Consequently, the graph of~$u^-$ on~$Q$ would stay below
\begin{eqnarray*}
&& -a_j+(a_j M^{1-k_o}/{2})+\theta^3\varepsilon^\star p+\theta a_j
+\theta^3\varepsilon^\star p
\\ &&\qquad=-a_j+(a_j M^{1-k_o}/{2})+3\theta a_j
< -a_j+(a_j M^{2-k_o}/{2}),
\end{eqnarray*}
as long as we choose~$\theta<M^{1-k_o} (M-1)/6$. Hence,~$A\cap Q=
\varnothing$,
which is in contradiction with~\eqref{DQ}.
This ends the proof of~\eqref{post},
and therefore the one of~\eqref{L beta}.

As a consequence, by taking~$k:=k_o$ in~\eqref{L beta}
and recalling~\eqref{k o}, we obtain that
\begin{equation}\label{L beta 1}
\left| \Big\{ u^-_j <
-\frac{ a_j }{2} \Big\} \cap Q_1\right|\ge 
\frac{3}{4}
\end{equation}
for large~$j$.
A mirror argument on~$u^+_j$ gives that
\begin{equation}\label{L beta 2}
\left| \Big\{ u^+_j >  
\frac{ a_j }{2} \Big\} \cap Q_1\right|\ge 
\frac{3}{4} 
\end{equation}
for large~$j$.
So, by~\eqref{L beta 1}
and~\eqref{L beta 2},
there must exist~$y'_j$ 
such that~$u^-_j( y'_j) \le -a_j/2$ and~$u^+_j(y'_j)\ge a_j/2$,
hence
$$ u^+_j(y'_j)-u^-_j(y'_j)\ge a_j\ge a/2.$$
This is in contradiction with~\eqref{so long},
and so the proof of Lemma~\ref{ovidiu 6.9}
is completed.
\end{proof}

\subsection{Completion of the proof of Theorem~\ref{MAIN}}

Thanks to Lemma~\ref{ovidiu 6.9},
we have obtained a statement analogous to the one
of Lemma~6.9 of~\cite{CRS}, but with uniform estimates.
Then, the argument from Lemma~6.10 to the end of Section~6
in~\cite{CRS} also yield the proof of Theorem~\ref{MAIN} here.

\section{Proof of Theorem~\ref{-2-}}

The proof is by contradiction.
We suppose that there are~$s_k$-minimal cones~$E_k$
that are not hyperplanes, with~$s_k\rightarrow 1^-$.
By dimensional reduction (see Theorem~10.3 of~\cite{CRS}),
we may focus on the case in which~$E_k$ is singular at
the origin.

{F}rom~\cite{CV}, up to subsequence, we have that~$E_k$
approaches locally uniformly a classical cone of minimal
perimeter. Since~$n\le 7$, we have that such a cone
is a halfspace, say~$\{x_n<0\}$
(see, e.g., Section~1.5.2 of~\cite{Miranda}).
So, for large~$k$, we have that~\eqref{main trap}
holds true for~$E_k$, namely
$$ \partial E_k\cap B_1\subseteq \{ |x\cdot e_n|\le 
\varepsilon_\flat \}.$$
Therefore, by Theorem~\ref{MAIN},
we obtain that~$\partial E_k$
is smooth, i.e.~$E_k$
is a hyperplane, for infinitely many~$k$'s.
This is a contradiction with our assumptions
and it proves 
Theorem~\ref{-2-}.

\section{Proof of Theorem~\ref{-3-}}

Let~$E$ be $s$-minimal. We take the blow up
of~$E$ and we obtain a minimal cone~$E'$ (see Theorem~9.2
of~\cite{CRS}).

By Theorem~\ref{-2-}, we know that~$E'$ is a hyperplane.
Then, $\partial E$ is~$C^{1,\alpha}$, thanks to Theorem~9.4
in~\cite{CRS}. This ends the proof of Theorem~\ref{-3-}.

\section{Proof of Theorems~\ref{GG1} and~\ref{GG2}}

The proofs of Theorems~\ref{GG1} and~\ref{GG2}
follow now verbatim the ones of Theorems~11.7
and~11.8 in~\cite{Giusti} (the only difference
is that the dimensional reduction is
performed via
Theorem~10.3 of~\cite{CRS},
and the regularity needed in low dimension
is assured here by
Theorem~\ref{-2-}).

\end{document}